\numberwithin{equation}{section}
\def\R{{\mathbb R}}
\def\domf{\Omega}
\def\g{\Gamma}
\def\velf{\boldsymbol{u}}
\def\tf{\boldsymbol \varphi}
\def\ts{\boldsymbol{\xi}}
\def\tp{q}
\def\str{\boldsymbol{\sigma}}
\def\du{\boldsymbol{D}}
\def\norm{\boldsymbol{n}}
\def\displ{\boldsymbol\eta}
\def\disv{\boldsymbol{v}}
\def\disvt{\tilde{\boldsymbol{v}}}
\def\f{\boldsymbol{f}}
\def\dt{\partial_t}
\def\dtt{\partial_{tt}}
\def\n{\|}
\def\dx{d \boldsymbol{x}}
\def\velfh{\boldsymbol{u}_h}
\def\velfth{\tilde{\boldsymbol{u}}_h}
\def\tfh{\boldsymbol{\varphi}_h}
\def\tvh{\boldsymbol{\psi}_h}
\def\tsh{\boldsymbol{\xi}_h}
\def\tph{q_h}
\def\displh{\boldsymbol\eta_h}
\def\disvh{\boldsymbol{v}_h}
\def\disvth{\tilde{\boldsymbol{v}}_h}
\def\thf{\boldsymbol\theta_f}
\def\defl{\boldsymbol\delta_f}
\def\ths{\boldsymbol{\theta}_s}
\def\des{\boldsymbol\delta_s}
\def\thp{\theta_p}
\def\dep{\delta_p}
\def\thv{\boldsymbol\theta_v}
\def\dev{\boldsymbol\delta_v}
\def\thvt{\boldsymbol\theta_{\tilde{v}}}
\def\devt{\boldsymbol\delta_{\tilde{v}}}
\def\ltt{\lesssim}
\def\gtt{\gtrsim}
\newtheorem{remark}{Remark}
\begin{document}

\title{Stability and convergence analysis of the extensions of the kinematically coupled scheme for the fluid-structure interaction}

\author{Martina Bukac\thanks{%
Department of Applied and Computational Mathematics and Statistics, University of Notre Dame, Notre Dame, IN 46556, USA. email: \texttt{mbukac@nd.edu}. Partially supported by the NSF under grants DMS 1318763 and DMS 1619993. (Corresponding author.)} \and Boris Muha \thanks{%
Department of Mathematics, University of Zagreb, 10000 Zagreb,
Croatia. email: \texttt{borism@math.hr}. Partially supported by Croatian Science Foundation grant
number 9477 and by the NSF under grant NSF DMS 1311709.}}

\maketitle


\begin{abstract}
In this work we analyze the stability and convergence properties of a loosely-coupled scheme, called the kinematically coupled scheme, and its extensions for the interaction between an incompressible, viscous fluid and a thin, elastic structure. We consider a benchmark problem where the structure is modeled using a general thin  structure model, and the coupling between the fluid and structure is linear.
We derive the energy estimates associated with the unconditional stability of an extension of the kinematically coupled scheme, called the $\beta$-scheme. Furthermore, for the first time  we present \textit{a priori} estimates showing  optimal, first-order in time convergence in the case when $\beta=1$. We further discuss the extensions of our results to other fluid-structure interaction problems,  in particular the fluid-thick structure interaction problem. The theoretical stability and convergence results are supported with numerical examples.

\end{abstract}

\begin{keywords}
Fluid-structure interaction,
error estimates,
convergence rates,
non-iterative scheme 
\end{keywords}

\begin{AMS}
Primary: 65M15, 74F10, Secondary: 65M60, 74S05, 74H15, 76M10
\end{AMS}

\section{Introduction}

The interaction between an incompressible viscous fluid and  an elastic structure has been of great interest due to  various applications in different areas (see e.g. \cite{bodnar2014fluid}). This problem is characterized by highly non-linear coupling between two different physical phenomena.  As a result, a comprehensive study of such problems remains a challenge~\cite{hou2012numerical}. The solution strategies for fluid-structure interaction (FSI) problems can be roughly classified as monolithic schemes and loosely or strongly coupled partitioned schemes. Monolithic algorithms, see for example \cite{bazilevs2008isogeometric,gerbeau2003quasi,
nobile2001numerical,gee2011truly,ryzhakov2010monolithic,
hron2006monolithic}, consist of solving the entire coupled problem as one system of algebraic equations.
They, however, require well-designed preconditioners~\cite{gee2011truly,badia2008modular,heil2008solvers} and are generally quite expensive in terms of computational time and memory requirements. 
Hence, to obtain smaller and better conditioned sub-problems, reduce the computational cost and treat each physical phenomenon separately, partitioned numerical schemes that solve the fluid problem separately  from the structure problem have been a popular choice.  The development of partitioned numerical methods for FSI problems has been extensively studied~\cite{degroote2008stability,degroote2011similarity,farhat2006provably,
bukavc2012fluid,badia2009robin,SunBorMulti,
Fernandez3,nobile2008effective,hansbo2005nitsche,
lukavcova2013kinematic,
banks2014analysis,banks2014analysis2,fernandez2015convergence}, but the design of efficient schemes to produce stable, accurate results remains a challenge. Moreover, despite the recent developments, there are just a few works where the convergence is proved rigorously \cite{BorSun,SunBorMulti,Fernandez3,fernandez2015convergence}.

A classical partitioned scheme, particularly popular in aerodynamics, is known as the  Dirichlet-Neumann (DN) partitioned scheme~\cite{causin2005added,nobile2008effective,forster2007artificial}. The  DN scheme consists of solving the fluid problem with a Dirichlet boundary condition (structure velocity)  at the fluid-structure interface, and the structure problem with a Neumann boundary condition  (fluid stress) at the interface. While the  DN scheme features appealing properties such as modularity, simple implementation and fast computational time, it  has been shown to be stable only if the structure density is much larger than the fluid density. This requirement is easily  achieved in some applications like aerodynamics, but not in other applications like hemodynamics where the density of blood is of the same order of magnitude as the density of  arterial walls.  
In these cases,  the energy of the discrete problem in the  DN partitioned algorithm does not accurately approximate  the energy of the continuous problem, introducing numerical instabilities known as \textit{the added mass effect}~\cite{causin2005added}. A partial solution to this problem is to sub-iterate the fluid and structure sub-problems at each time step until the energy at the fluid-structure interface is balanced. However, schemes that require sub-iterations, also known as strongly coupled schemes, are computationally expensive and may suffer from convergence issues for certain parameter values~\cite{causin2005added,forster2007artificial}.

To circumvent these difficulties, and to retain the main advantages of partitioned  schemes,  several new  algorithms have been proposed. Methods proposed in~\cite{colciago2014comparisons,figueroa2006coupled,nobile2008effective} use a membrane model for the structure that is then embedded into the fluid problem where it appears as a generalized Robin boundary condition. In addition to the classical Dirichlet-Neumann and Neumann-Dirichlet schemes, Robin-Neumann and a Robin-Robin algorithms, that converge without relaxation and need a smaller number of sub-iterations between the 
fluid and the structure, provided that the interface parameters are suitably chosen, have been proposed in~\cite{badia2009robin,badia2008fluid,gerardo2010analysis}. 
Karniadakis et al.~\cite{baek2012convergence,yu2013generalized} proposed fictitious-pressure and fictitious-mass algorithms, in which  the added mass effect is accounted for by incorporating additional terms into governing equations.
However, algorithms proposed in~\cite{badia2009robin,badia2008fluid,nobile2008effective,baek2012convergence,yu2013generalized}  require sub-iterations between the fluid and the structure sub-problems in order to achieve stability.   
A different approach based on Nitsche's penalty method~\cite{hansbo2005nitsche} was used in~\cite{burman2009stabilization,burman2013unfitted}. The formulation in~\cite{burman2009stabilization,burman2013unfitted}
still suffers from stability issues, which were corrected by adding a weakly consistent stabilization term
that includes pressure variations at the interface. The splitting error, however, lowers the temporal
accuracy of the scheme, which was then corrected by proposing a few defect-correction sub-iterations
to achieve an optimal convergence rate. Recently,  so called \emph{added-mass partitioned schemes} were proposed in~\cite{banks2014analysis,banks2014analysis2}. Using the von Neumann stability analysis, the authors showed that the algorithm proposed in~\cite{banks2014analysis} is weakly stable under a Courant--Friedrichs--Lewy (CFL) condition, while the algorithm proposed in~\cite{banks2014analysis2} is stable under a condition on the time step which depends on the structure parameters. Even though the authors do not derive the convergence rates, their numerical results indicate that both schemes are second-order accurate in time.

A loosely-coupled numerical scheme, called the ``kinematically coupled scheme'', was introduced
in~\cite{guidoboni2009stable}. The scheme is based on the Lie operator splitting, where the 
fluid and the structure sub-problems are fully decoupled and communicate only via  the interface conditions. More precisely, in each time-step the initial interface velocity in the structure sub-problem is taken from the fluid sub-problem and vice versa.
Due to the appealing features of the kinematically coupled scheme, such as modularity, stability, and easy implementation, several extensions have been proposed that include modeling FSI between artery, blood flow, and a cardiovascular device called a stent~\cite{BorSunStent}, FSI with thick structures~\cite{thick}, FSI with composite structures~\cite{multilayered}, FSI with poroelastic structures~\cite{bukavc2012fluid}, and FSI involving non-Newtonian fluids~\cite{Lukacova,lukavcova2013kinematic}. The kinematically coupled scheme has been shown to be unconditionally stable, circumventing instabilities associated with the added mass effect~\cite{guidoboni2009stable,SunBorMar,Fernandez3}. However, its  order of temporal convergence  is only $\mathcal{O}(\sqrt{\Delta t})$ \cite{BorSun,Fernandez3}, and hence sub-optimal. In order to improve the accuracy, the extension of the kinematically coupled scheme, so-called $\beta$-scheme, was introduced by the authors in~\cite{bukavc2012fluid} and ``the incremental displacement correction scheme'' was  proposed by Fernandez in~\cite{Fernandez3}.  Better accuracy was achieved in~\cite{bukavc2012fluid} by introducing a parameter $\beta$ which controls the amount of the fluid pressure used to load the structure sub-problem. In~\cite{Fernandez3} the accuracy is improved by treating the structure explicitly in the fluid sub-problem and then correcting it in the structure sub-problem. A more detailed comparison between these two basic extensions  of the kinematically coupled scheme is given in Section \ref{comparison}. While the incremental displacement  correction scheme is supported by the stability and convergence analysis, the improved accuracy of $\beta$-scheme had only been observed numerically~\cite{bukavc2012fluid}.

The goal of this work is to understand the mechanism which leads to a better accuracy and prove the optimal convergence result for the $\beta$-scheme. We show that the optimal convergence rate is achieved when $\beta=1$,  in which case the structure is loaded with the full fluid stress. The main result of the paper is Theorem \ref{MainThm}, in which we derive the error estimates of the fully discrete problem. Our estimates prove the optimal, first-order convergence in time and optimal convergence in space. The results are obtained assuming that the structure undergoes infinitesimal displacements. In this case, the coupling between the fluid and structure is linear. This is a standard assumption in the convergence and stability analysis of the FSI problems (see e.g. \cite{causin2005added,Fernandez3}) because  the ``added-mass'' effect and stability issues connected to it are already present in the linear case. Even though the analysis in the paper is performed on a linear problem, the main results are numerically tested and confirmed  on the full non-linear problem.

This paper is organized as follows: We introduce the linear fluid-structure interaction model in Section~2, deriving the weak formulation of the monolithic problem. The numerical scheme is presented  in Section~3, while the comparison with the alternative scheme proposed in \cite{Fernandez3} is given in Section \ref{comparison}. The energy estimates associated with the unconditional stability are derived in Section~4. In Section~5 we derive the \textit{a priori} energy estimates and prove  first-order convergence in time. In Section~\ref{Sec:thick} we generalize the obtained result to the cases  when the structure is thick or multi-layered. Theoretical results from Sections~5 and 6 are supported by the numerical experiments in Section~7. Finally, conclusions are drawn in Section~8.

\section{Description of the problem}

We consider a  linear fluid-structure interaction problem where the structure is described by some lower dimensional, linearly elastic model (for example membrane, shell, plate, etc). 
In the cases of  nonlinear, moving boundary FSI problems, even the question of existence of a solution is challenging and we refer the reader to \cite{BorSun} and references within.

Let $\Omega\subset\R^d$, $d=2,3$, be an open, smooth set and $\partial\Omega=\overline{\Sigma}\cup\overline{\Gamma}$, where $\Gamma$ represents elastic part of the boundary while $\Sigma$ represents artificial (inflow or outflow) of the boundary (see Figure~\ref{domain}). We assume  that the structure undergoes infinitesimal displacements, and that the fluid is incompressible, Newtonian, and is characterized by a laminar flow regime.
 \begin{figure}[ht!]
 \centering{
 \includegraphics[scale=1.3]{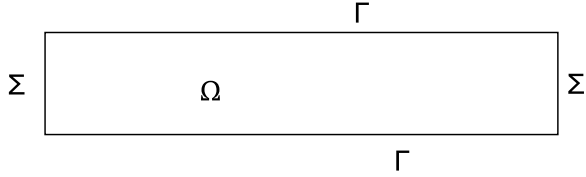}
 }
 \caption{Fluid domain $\Omega$. The lateral boundary $\Gamma$ represents elastic structure. }
\label{domain}
 \end{figure}

Thus, we model the fluid by the time-dependent Stokes equations in a fixed domain $\domf$
\begin{align}\label{Stokes1}
& \rho_f \dt \velf = \nabla \cdot \str (\velf,p), \qquad \nabla \cdot \velf = 0 & \textrm{in}\; \domf \times (0,T), \\
&\str (\velf,p) \norm=-p_{in/out}(t)\norm & \textrm{on}\; \Sigma\times (0,T), \label{Neumann}\\
&\velf(.,0)={\bf u}^0  &  \textrm{in}\;\domf,
\end{align}
where $\velf=(u_i)_{i=1,\dots,d}$ is the fluid velocity, $\str  (\velf,p)= -p \boldsymbol{I} + 2 \mu \du(\velf)$  is the fluid stress tensor, $p$ is the fluid pressure, $\rho_f$ is the fluid density, 
$\mu$ is the fluid viscosity,  $\norm$ is the outward normal to the fluid boundary, $p_{in/out}$ is the prescribed inflow or outflow pressure and  $\du(\velf) = (\nabla \velf+(\nabla \velf)^{T})/2$ is the strain rate tensor. 

\begin{remark}
We could also prescribe other types of boundary conditions on  various parts of $\Sigma$, for example symmetry boundary condition, slip boundary condition or no-slip boundary condition. These types of mixed boundary conditions do not effect our analysis. However since we are interested in simulating a pressure-driven flow and in order to keep the notation simple, we choose to work only with boundary condition~\eqref{Neumann}.
\end{remark}

The lateral boundary represents a  thin, elastic wall whose dynamics is modeled by some linearly elastic lower-dimensional model, given by
\begin{align}
 & \rho_s \epsilon \dtt \displ+{\mathcal L}_s\displ= {\bf f}  &  \textrm{on} \; \Gamma\times (0,T), \label{mem} \\
 & \displ(.,0)=\displ^0,\; \dt\displ(.,0)={\bf v}^0 & \textrm{on} \; \Gamma, \label{strucinit}
\end{align}
where $\displ= (\eta_i)_{i=1,\dots,d}$ denotes the structure displacement, $\f$ is a vector of surface density of the force applied to the thin structure, $\rho_{s}$ denotes the structure density and $\epsilon$ denotes the structure thickness. Moreover, we define a  bilinear form associated with the structure operator
$$a_s(\displ,\ts)=\int_{\Gamma}{\mathcal L}_s\displ\cdot\ts dS \quad \textrm{and norm} \;\; \|\displ\|^2_S=a_s(\displ,\displ).$$
  We assume that operator ${\mathcal L}_s$ is such that norm $\|.\|_S$ is equivalent to the $H^1(\Gamma)$ norm. One example of such operator is the one associated with the linearly elastic cylindrical Koiter shell used in \cite{bukavc2012fluid}.
Finally, we prescribe clamped boundary conditions for the thin structure:
\begin{equation}\label{clamped}
 \displ(0,t)=\displ(L,t) = 0, \quad \textrm{for} \; t\in(0,T).
\end{equation}
The fluid and the structure are coupled via the kinematic and dynamic boundary conditions: 
\begin{itemize}
\item[ ] \textit{The kinematic coupling condition (continuity of velocity):} $ \velf = \dt \displ \quad \textrm{on} \; \g \times(0,T) $.
\item[ ] \textit{The dynamic coupling condition (balance of contact forces):} $
\f=-\str (\velf,p) \norm \quad \textrm{on} \; \g \times (0,T).$
\end{itemize}


\subsection{Weak formulation of the monolithic problem}
 For a domain $A$, we denote by $H^k(A)$ the standard Sobolev space and $L^2(A)$ the standard space of square integrable functions. These are Hilbert spaces and we denote by $\n \cdot \n_{H^k(A)}$ and $\n\cdot \n_{L^2(A)}$ the corresponding norms.
\begin{gather}
V^f =(H^1(\domf))^d, \quad  Q^f =  L^2(\domf),  \quad V^s = (H_0^1(\g))^d, \quad
V^{fsi} =  \{ (\tf, \ts) \in V^f \times V^s | \ \tf|_{\g}  = \ts \}, \notag
\end{gather}
for all $t \in [0,T)$, and introduce the following bilinear forms
\begin{gather}\label{FluidBilinear}
 a_f(\velf, \tf) = 2 \mu \int_{\domf} \du(\velf) : \du(\tf) d \boldsymbol x,  \qquad  b(p, \tf) =  \int_{\domf} p \nabla \cdot \tf d \boldsymbol x.
 \end{gather}
We define  norm $\n \cdot \n_F$ associated with the fluid  bilinear form as $\n \velf \n_F : = \n \du(\velf) \n_{L^2(\domf)}, \;\; \forall \velf \in V^f.$
 
The variational formulation of the monolithic fluid-structure interaction problem now reads: given $t \in (0, T)$ find $(\velf, \displ, p) \in  V^f  \times V^s \times Q^f$ with $\velf = \partial_t \displ$ on $\g$, such that for all $(\tf, \ts, \tp) \in V^{fsi} \times Q^f$
\begin{gather}
\rho_f \int_{\domf} \dt \velf \cdot \tf \dx +a_f(\velf,\tf)-b(p, \tf)+b(\tp, \velf) +  \rho_{s} \epsilon\int_{\g} \dtt \displ\cdot \ts dx  
+a_s(\displ, \ts) =  \int_{\Sigma} p_{in/out}(t) \tf\cdot\norm dS. \label{weakmonolithic}
\end{gather}

\section{The numerical scheme}\label{scheme}

To solve the fluid-structure interaction problem presented in Section 2, we use a loosely coupled numerical scheme, called the kinematically coupled $\beta$ scheme.  The scheme is based on an operator splitting method called  Lie splitting~\cite{glowinski2003finite}, which separates the original problem into a fluid sub-problem and a structure sub-problem. The equations are split in a way such that the fluid problem is solved with a Robin-type boundary condition including the structure inertia.  As we shall show later, this is the main key in proving the stability of the scheme. The structure sub-problem is loaded by a part of the fluid normal stress obtained from the previous time step.  The amount of stress  applied to the structure is measured by a parameter $\beta\in [0,1]$. Namely, we split the normal fluid stress as
$$ \str \norm  =  \underbrace{\str \norm - \beta \str \norm}_{\textrm {Part I}}  \underbrace{+\beta \str \norm}_{\textrm{Part II}}. $$
Part II in the equation above is used to load the thin structure, while Part I gives rise to a Robin-type boundary condition for the fluid sub-problem.  

The case $\beta=0$ corresponds to the classical kinematically coupled scheme which was introduced in \cite{guidoboni2009stable}, where  in each time-step the fluid and structure sub-problems communicate only via the initial guesses for the interface conditions.
Namely, the structure elastodynamics is driven only by the initial velocity, setting it equal to the fluid velocity from the previous time step. Including some loading from the fluid, as done in~\cite{Martina_jcp}, was shown to increase the accuracy of the scheme.  The loading on the structure used in~\cite{Martina_jcp} was introduced in a similar fashion as here, but instead of loading the structure with the fluid normal stress, it was loaded only by the fluid pressure. This was done because the algorithm presented there was motivated by  biomedical applications (blood flow through the compliant vessels), where the pressure is the leading order term of the fluid stress. However, as we will see later, for theoretical reasons here we take into  account the full normal stress.


Let $t^n:=n \Delta t$ for $n = 1, \ldots, N,$ where $T=N \Delta t$ is the final time. To discretize the problem in time, we use the Backward Euler scheme. We denote the discrete time derivative by $d_t \tf^{n+1} = \Delta t^{-1}(\tf^{n+1}-\tf^{n})$.

The kinematically coupled $\beta$ scheme for the time-discrete problem is given as follows (see~\cite{guidoboni2009stable,Martina_jcp} for details):
\begin{itemize}
\item \textbf{Step 1: The structure sub-problem.} 
Find $ \disvt^{n+1},$ and $\displ^{n+1}$ such that  
\begin{align}
& \rho_s \epsilon \frac{\disvt^{n+1}-\disv^{n}}{\Delta t}+{\mathcal L}_S\displ^{n+1}= -\beta \str (\velf^n,p^n) \norm   &  \textrm{on} \; \Gamma,  \label{Step1F1} \\
 &d_t \displ^{n+1} = \disvt^{n+1} & \textrm{on} \; \Gamma,\label{Step1F2}
\end{align} 
with boundary conditions: 
\begin{equation}
 \displ^{n+1}(0)=\displ^{n+1}(L) = 0. \label{lcend}
\end{equation}
The structure velocity computed in this sub-problem is then used as an initial condition in Step 2. Note that the velocity of the fluid does not change in this step. 
\\ 
 \item \textbf{Step 2. The fluid sub-problem.}
Find $\velf^{n+1}, p^{n+1}$ and $\disv^{n+1}$ such that 
\begin{align}
&\rho_f d_t \velf^{n+1} = \nabla \cdot \str (\velf^{n+1},p^{n+1}) & \textrm{in}\; \domf, \label{lcbeg}\\
& \nabla \cdot \velf^{n+1} = 0 & \textrm{in} \; \domf, \label{DivC} \\
& \rho_s \epsilon \frac{\disv^{n+1}-\disvt^{n+1}}{\Delta t}= -\str(\velf^{n+1},p^{n+1})  \norm  + \beta \str(\velf^{n},p^{n})  \norm  &  \textrm{on} \; \Gamma, \label{robin1}  \\
   &  \velf^{n+1}=\disv^{n+1}   & \textrm{on} \; \Gamma, \label {kin}
\end{align}
with the following boundary conditions on $\Sigma$: 
\begin{gather}
 \str (\velf^{n+1},p^{n+1})  \norm = -p_{in/out}(t^{n+1}) \norm \; {\rm on}\ \Sigma,
\end{gather}
and the initial conditions obtained in Step 1. 

Do $t^n=t^{n+1}$ and return to Step 1.
\end{itemize}

\begin{remark}
Combining equation~\eqref{kin} with equation~\eqref{robin1}  gives rise to a Robin-type boundary condition for the fluid velocity.
The structure displacement remains unchanged in this step. 
\end{remark}

To discretize the problem in space, we use the finite element method based on a conforming FEM triangulation with maximum triangle diameter $h$. Thus, we introduce the finite element spaces $V^f_h \subset V^f, Q^f_h \subset Q^f$, and $V^s_h \subset V^s$.  
The fully discrete numerical scheme in the weak formulation is given as follows:
\begin{itemize}
\item \textbf{Step 1.} Given $t^{n+1} \in (0,T], n = 0, \ldots, N-1,$  find $\disvth^{n+1} \in V_h^s $, with  $d_t \displh^{n+1}=\disvth^{n+1}$,  such that for all $ \tsh \in V^s_h $ we have 
 \begin{gather}
\rho_{s} \epsilon \int_{\Gamma} \frac{\disvth^{n+1}-\disvh^{n}}{\Delta t} \cdot \tsh dS + a_s (\displh^{n+1}, \tsh)
 =-\beta\int_{\Gamma} \str (\velfh^{n},p_h^{n})  \norm \cdot \tsh dS.\label{S1discrete}
\end{gather}
\item \textbf{Step 2.} Given $\disvth^{n+1}$ computed in Step 1, find $(\velfh^{n+1}, \disvh^{n+1}) \in V_h^f\times V^s_h, $ with $\velfh^{n+1}|_{\g} = \disvh^{n+1},$ and $p_{h}^{n+1} \in Q_h^f$ such that for all $(\tfh,\tvh,\tph) \in V_h^f \times V_h^s \times Q_h^f$, with $\tfh|_{\g} = \tvh$, we have
\begin{gather}
\rho_f \int_{\domf} d_t \velfh^{n+1} \cdot \tfh \dx+  a_f(\velfh^{n+1}, \tfh)- b(p^{n+1}_{h}, \tfh) +b(\tph, \velfh^{n+1})+\rho_{s} \epsilon \int_{\Gamma} \frac{\disvh^{n+1}-\disvth^{n+1}}{\Delta t} \cdot \tvh dS  \nonumber \\
 =
 \beta\int_{\Gamma} \str (\velfh^{n},p_h^{n})  \norm \cdot \tvh dS 
+\int_{\Sigma} p_{in/out}(t^{n+1}) \tfh\cdot\norm dS. \label{S2discrete}
 \end{gather}
\end{itemize}

\subsection{Comparison of the kinematically coupled $\beta$ scheme and the incremental displacement-correction scheme}\label{comparison}

In this section we
illustrate the differences between the kinematically coupled $\beta$ scheme \cite{Martina_jcp} and the incremental displacement-correction scheme~\cite{Fernandez3}.
It was proven in \cite{BorSun} that the  original kinematically coupled scheme (case $\beta=0$) applied to the full, nonlinear moving boundary FSI problem is convergent. Moreover, even though not explicitly stated, it was proven that the splitting error is of order at most $\sqrt{\Delta t}$ (\cite{BorSun}, formula (67) and proof of Theorem 2). The same was proven in~\cite{Fernandez3} for a linear problem (see \cite{Fernandez3}, Theorem 5.2).

We first consider the $\beta$ scheme and sum  equations~\eqref{Step1F1} and \eqref{robin1}, and use \eqref{Step1F2}, \eqref{lcbeg}, \eqref{DivC}, \eqref{kin}.
To shorten the notation in this section, we denote $\str^{n}:=\str(\velf^{n},p^{n}), \; \forall n.$
Variables   $\velf^{n+1}$, $\disv^{n+1}$ and $\displ^{n+1}$ satisfy the following equations:
\begin{align}
&\rho_f d_t \velf^{n+1} = \nabla \cdot \str^{n+1} & \textrm{in}\; \domf,\\
& \nabla \cdot \velf^{n+1} = 0 & \textrm{in} \; \domf, \\
& \rho_s \epsilon \frac{\disv^{n+1}-\disv^{n}}{\Delta t}+{\mathcal L}_S\displ^{n+1}= -(\str^{n+1} \norm)      &  \textrm{on} \; \Gamma,   \\
   &  \disv^{n+1}=\velf^{n+1}  & \textrm{on} \; \Gamma, \\
  &d_t \displ^{n+1} = \disv^{n+1}+(\disvt^{n+1}-\disv^{n+1}) & \textrm{on} \; \Gamma.\label{SplttingErr}
\end{align}

Notice that this is exactly the monolithic formulation of the considered FSI problem \eqref{Stokes1}-\eqref{clamped} with an additional term in~\eqref{SplttingErr}. Therefore, term $(\disvt^{n+1}-\disv^{n+1})$ accounts for the splitting error. 
From~\eqref{robin1} we obtain
\begin{equation}\label{SplittingErr2}
\disvt^{n+1}-\disv^{n+1}=\frac{\Delta t}{\varrho_s\epsilon}\big (\str^{n+1} \norm - \beta \str^{n} \norm\big )=\frac{\Delta t}{\varrho_s\epsilon} \Big [\beta\big ( \str^{n+1} \norm-\str^{n} \norm \big )+(1-\beta)\str^{n+1} \norm \Big ] \quad \textrm{on} \; \g.
\end{equation}
The right hand side of \eqref{SplittingErr2} consists of two terms, one involving $\str^{n+1} \norm -  \str^{n}\norm$ and the other involving $\str^{n+1} \norm$. From the Taylor expansion, one can see that the first term will have first order accuracy in time, while no such estimate can be obtained for the second term. Therefore,  the choice $\beta=1$  yields the smallest splitting error because the last term will equal zero. Hence, the main goal in our analysis is to take advantage of the correction made by the fluid stress (with $\beta=1$) in order to get better estimates of the splitting error term which yield optimal convergence rate.

In order to remedy the problem of sub-optimal accuracy, Fernadez~\cite{Fernandez3} proposed a different extension of the kinematically coupled scheme, so-called ``incremental displacement-correction" scheme. In  the first step of this scheme, one solves the FSI problem with the explicit treatment of the structure elasticity operator  ${\mathcal L}_S\displ^{n}$, correcting it  in the second step. Instead of adding and subtracting the normal stress from the previous time step, which leads to the $\beta$ scheme, the incremental displacement-correction scheme is obtained by adding and subtracting the elastic operator ${\mathcal L}_S\displ^{n}$ applied to the displacement from the previous time step. This scheme can also be viewed as a kinematic perturbation of the monolithic scheme in the following way. Let $\velf^{n+1}$, $\disv^{n+1}$ and $\displ^{n+1}$ be the fluid velocity, the structure velocity, and the structure displacement, respectively, obtained in $n+1$th step of the incremental displacement-correction scheme. Then, they satisfy the following equations:
\begin{align}
&\rho_f d_t \velf^{n+1} = \nabla \cdot \str^{n+1} & \textrm{in}\; \domf,\\
& \nabla \cdot \velf^{n+1} = 0 & \textrm{in} \; \domf, \\
& \rho_s \epsilon \frac{\disv^{n+1}-\disv^{n}}{\Delta t}+{\mathcal L}_S\displ^{n+1}= -(\str^{n+1} \norm)     &  \textrm{on} \; \Gamma,   \\
   &  \disv^{n+1}+(\disvt^{n+1}-\disv^{n+1})=\velf^{n+1} & \textrm{on} \; \Gamma, \\
  &d_t \displ^{n+1} = \disv^{n+1} & \textrm{on} \; \Gamma,\label{SplttingFer}
\end{align}
Again, we see that term $(\disvt^{n+1}-\disv^{n+1})$ accounts for the splitting error, but in this case the splitting error is manifested as the error in the kinematic coupling condition. Fernandez showed that this scheme has an optimal, first-order convergence in time (\cite{Fernandez3}, Theorem 5.2).

To summarize, there are two different extensions of the kinematically coupled scheme presented in the literature, both introduced to improve the accuracy. Both of them correct the splitting error, but in a different manner. The $\beta$ scheme first solves the structure problem with the forcing from the fluid computed in the previous time step. Then, it solves the fluid problem with a Robin-type boundary condition involving the structure inertia. 
On the other hand, in the incremental displacement-correction scheme one first solves the whole FSI problem with the explicit treatment of the elastic operator, and then in the second step corrects the structure displacement. Both scheme have the structure inertia included in the fluid step which is crucial for the stability. 

\section{Stability analysis}\label{sec:Stability}

In this section we derive an energy estimate that is associated with unconditional stability of algorithm~\eqref{S1discrete}-\eqref{S2discrete}. Based on our previous results~\cite{Martina_jcp} and arguments in Section~\ref{comparison}, we expect the optimal accuracy when $\beta=1$. Namely, when $0\leq \beta<1$ we have additional term in the splitting error \eqref{SplttingErr} which causes suboptimal convergence rate of order $1/2$. However, as we show in the Section with numerical experiments, in practical computations this term can be small for $\beta$ close to 1. Hence, from here on we use $\beta=1$ in our analysis. 

Let $a \ltt(\gtt) b$ denote that there exists a positive constant $C$, independent of the mesh size $h$ and the time step size $\Delta t$, such that $a \le (\ge) C b$.
Let $\mathcal{E}_f(\velfh^n)$ denote the discrete kinetic energy of the fluid, $\mathcal{E}_v(\disvh^n)$ denote the discrete kinetic energy of the structure, and $\mathcal{E}_s(\displh^n)$ denote the discrete elastic energy of the structure at time level $n$, defined respectively by
\begin{gather}
 \mathcal{E}_f(\velfh^n) = \frac{\rho_f}{2} \n\velfh^n\n^2_{L^2(\domf)}, \quad \mathcal{E}_v (\disvh^n)= \displaystyle\frac{\rho_{s} \epsilon}{2} \n \disvh^{n}\n^2_{L^2(\Gamma)}, \quad  \mathcal{E}_s (\displh^n) =\frac{1}{2} \n \displh^n \n^2_S.\label{thm1}
\end{gather}
The stability of the loosely-coupled scheme~\eqref{S2discrete}-\eqref{S1discrete} is stated in the following result.
\begin{theorem}
Let $\{(\velfh^n, p_{h}^n, \disvth^n,  \disvh^n, \displh^n \}_{0 \le n \le N}$ be the solution of~\eqref{S2discrete}-\eqref{S1discrete}.
Then, the following estimate holds:
  \begin{gather}
  \mathcal{E}_f(\velfh^N)+\mathcal{E}_v(\disvh^N)+\mathcal{E}_s(\displh^N)  + \frac{\Delta t^2}{2 \rho_s \epsilon} \n\str(\velfh^{N},p_h^{N}) \norm\n^2_{L^2(\Gamma)} +\frac{\rho_f \Delta t^2}{2}\sum_{n=0}^{N-1}\n  d_t \velfh^{n+1} \n^2_{L^2(\domf)}
     \notag \\  
 +\frac{\Delta t^2}{2}\sum_{n=0}^{N-1}\n d_t\displh^{n+1}\n^2_S
+ \mu \Delta t \sum_{n=0}^{N-1} \n \velfh^{n+1} \n^2_{F}  +\frac{\rho_s \epsilon}{2}\sum_{n=0}^{N-1}\n \disvth^{n+1}-\disvh^{n} \n^2_{L^2(\Gamma)}  
\notag \\
\ltt   \mathcal{E}_f(\velfh^0)+\mathcal{E}_v(\disvh^0)+\mathcal{E}_s(\displh^0)   + \frac{\Delta t^2}{2 \rho_s \epsilon} \n\str(\velfh^{0},p_h^{0}) \norm\n^2_{L^2(\Gamma)}
+\Delta t \sum_{n=0}^{N-1} \n p_{in/out} (t^{n+1})\n^2_{L^2(\Sigma)}.\label{theorem:stability}
  \end{gather}
 \end{theorem}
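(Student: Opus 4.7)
The plan is to test each of the two sub-problems of the scheme with appropriate multiples of the unknowns themselves, sum the resulting identities, and exploit a cancellation particular to $\beta=1$ to telescope everything.

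\textbf{Step 1 (test functions).} I would first test the structure sub-problem \eqref{S1discrete} with $\tsh = \Delta t\, \disvth^{n+1}$; since $d_t\displh^{n+1}=\disvth^{n+1}$, this is the same as testing with $\displh^{n+1}-\displh^n$ in the $a_s$ term. Then I would test the fluid sub-problem \eqref{S2discrete} with $(\tfh,\tvh,\tph) = (\Delta t\,\velfh^{n+1},\,\Delta t\,\disvh^{n+1},\,\Delta t\, p_h^{n+1})$. The pressure-velocity duality terms cancel by incompressibility, and $a_f$ produces the viscous dissipation $2\mu\Delta t\,\|\velfh^{n+1}\|_F^2$.

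\textbf{Step 2 (polarization and telescoping of the natural energies).} On the time-difference terms I would use the identity $(a-b,a)_{L^2}=\tfrac12(\|a\|^2-\|b\|^2+\|a-b\|^2)$. This converts $\rho_f\int_\Omega(\velfh^{n+1}-\velfh^n)\cdot\velfh^{n+1}$ into $d_t\mathcal{E}_f$ plus $\tfrac{\rho_f}{2}\|\velfh^{n+1}-\velfh^n\|^2 = \tfrac{\rho_f \Delta t^2}{2}\|d_t\velfh^{n+1}\|^2$; the structure-elasticity term $a_s(\displh^{n+1},\displh^{n+1}-\displh^n)$ into $d_t\mathcal{E}_s$ plus $\tfrac{\Delta t^2}{2}\|d_t\displh^{n+1}\|_S^2$; and the two kinetic-energy pieces coming from $\disvh^{n+1}-\disvth^{n+1}$ (fluid step) and $\disvth^{n+1}-\disvh^n$ (structure step) combine so that the $\|\disvth^{n+1}\|^2$ terms cancel, leaving $d_t\mathcal{E}_v$ plus $\tfrac{\rho_s\epsilon}{2}\|\disvh^{n+1}-\disvth^{n+1}\|^2+\tfrac{\rho_s\epsilon}{2}\|\disvth^{n+1}-\disvh^n\|^2$. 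All of these are either energies that will telescope or positive dissipative contributions.

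\textbf{Step 3 (the coupling term — the main obstacle).} The only non-trivial term left is the coupling forcing from the previous time step, which after summing Step 1 and Step 2 takes the form $\beta\Delta t\int_\Gamma \str(\velfh^n,p_h^n)\norm\cdot(\disvh^{n+1}-\disvth^{n+1})\,dS$ with $\beta=1$. This is where the scheme's design is essential: the strong form \eqref{robin1} gives exactly
\begin{equation*}
\disvh^{n+1}-\disvth^{n+1} = -\frac{\Delta t}{\rho_s\epsilon}\bigl(\str(\velfh^{n+1},p_h^{n+1})\norm - \str(\velfh^n,p_h^n)\norm\bigr),
\end{equation*}
so the forcing term rewrites as $-\tfrac{\Delta t^2}{\rho_s\epsilon}\int_\Gamma \str^n\norm\cdot(\str^{n+1}\norm-\str^n\norm)\,dS$. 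A second polarization then turns this into $-\tfrac{\Delta t^2}{2\rho_s\epsilon}\bigl(\|\str^{n+1}\norm\|_{L^2(\Gamma)}^2-\|\str^n\norm\|_{L^2(\Gamma)}^2\bigr) + \tfrac{\Delta t^2}{2\rho_s\epsilon}\|\str^{n+1}\norm-\str^n\norm\|_{L^2(\Gamma)}^2$. The first piece is exactly the telescoping stress term that appears on both sides of \eqref{theorem:stability}. The second piece, by the same Robin identity, equals $\tfrac{\rho_s\epsilon}{2}\|\disvh^{n+1}-\disvth^{n+1}\|_{L^2(\Gamma)}^2$, and therefore cancels the matching positive term accumulated in Step 2. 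This exact cancellation is what makes the choice $\beta=1$ special; for $\beta<1$ a leftover term $(1-\beta)\str^{n+1}\norm$ would survive and spoil the argument.

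\textbf{Step 4 (closing the estimate).} After this cancellation, summing the identity over $n=0,\dots,N-1$ telescopes the three energies and the $\|\str\norm\|^2$ contribution, leaving the desired LHS. The only remaining term on the right is $\Delta t\sum_n\int_\Sigma p_{in/out}(t^{n+1})\,\velfh^{n+1}\cdot\norm\,dS$. I would bound it with Cauchy--Schwarz on $\Sigma$, a trace inequality $\|\velfh^{n+1}\|_{L^2(\Sigma)}\lesssim \|\velfh^{n+1}\|_{L^2(\Omega)} + \|\velfh^{n+1}\|_F$ (using Korn on $V^f$ with the Robin-type control from $\Gamma$), and Young's inequality to absorb the $\|\velfh^{n+1}\|_F^2$ contribution into the viscous dissipation on the LHS, moving the $\|p_{in/out}\|_{L^2(\Sigma)}^2$ sum to the RHS. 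This yields \eqref{theorem:stability}.
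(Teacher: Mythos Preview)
Your proof is correct and follows essentially the same approach as the paper: the same test functions, the same polarization identity to extract the energies and numerical dissipation, the same use of the Robin relation \eqref{robin1} to rewrite the coupling term, and the same cancellation of $\tfrac{\rho_s\epsilon}{2}\|\disvh^{n+1}-\disvth^{n+1}\|_{L^2(\Gamma)}^2$ against the stress-jump remainder, followed by Cauchy--Schwarz, trace and Korn to absorb the boundary forcing. The only cosmetic difference is that the paper applies $\Delta t$ after adding the two tested equations rather than building it into the test functions.
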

 \begin{proof}
 To prove the energy estimate, we test the problem~\eqref{S2discrete} with $(\tfh, \tvh, \tph) = (\velfh^{n+1}, \disvh^{n+1}, p_h^{n+1}),$ and problem~\eqref{S1discrete} with
  $\tsh = \disvth^{n+1} = d_t \displh^{n+1}.$
  Then, after adding them together, multiplying by $\Delta t$, and using identity
\begin{equation}
(a-b)a = \frac{1}{2}a^2-\frac{1}{2}b^2+\frac{1}{2}(a-b)^2,
\label{identity}
\end{equation}
     we get
  \begin{gather}
  \frac{\rho_f}{2}\left(\n \velfh^{n+1} \n^2_{L^2(\domf)}-\n \velfh^{n} \n^2_{L^2(\domf)}+ \n \velfh^{n+1}-\velfh^n \n^2_{L^2(\domf)}\right)+ 2 \mu \Delta t\n \du(\velfh^{n+1}) \n^2_{L^2(\domf)}
   +  \frac{\rho_s \epsilon}{2}\left(\n \disvh^{n+1} \n^2_{L^2(\Gamma)}-\n \disvh^{n} \n^2_{L^2(\Gamma)}\right)
    \notag \\
  +  \frac{\rho_s \epsilon}{2}\left( \n \disvth^{n+1}-\disvh^n \n^2_{L^2(\Gamma)}+ \n \disvh^{n+1}-\disvth^{n+1} \n^2_{L^2(\Gamma)}\right)
 +\frac{1}{2} a_s(\displh^{n+1}, \displh^{n+1})-\frac{1}{2} a_s(\displh^{n}, \displh^{n})
  \notag \\
+\frac{1}{2} a_s(\displh^{n+1}-\displh^{n}, \displh^{n+1}-\displh^{n})
=\Delta t  \int_{\Gamma} \str(\velfh^{n},p_h^{n}) \norm \cdot \left( \disvh^{n+1} - \disvth^{n+1}  \right)dS
\notag
+\Delta t\int_{\Sigma} p_{in/out}(t^{n+1}) \velfh^{n+1}\cdot\norm dS.
  \end{gather}
Since term $\frac{\varrho_s}{\Delta t}\disvth^{n+1}$ appears in both equations~\eqref{S1discrete}, \eqref{S2discrete}, but with the opposite sign, we used \eqref{identity} to cancel  the intermediate term $\n \disvth^{n+1} \n^2_{L^2(\Gamma)}$ in the estimate above. Denote by $\mathcal{I} = \Delta t  \int_{\Gamma} \str(\velfh^{n},p_h^{n})\norm \cdot \left( \disvh^{n+1} - \disvth^{n+1}  \right)dS$ the term that corresponds to the splitting error.  From~\eqref{robin1} we have
\begin{align}
&  \disvh^{n+1}-\disvth^{n+1}=\frac{\Delta t}{\rho_s \epsilon}\left( -\str(\velfh^{n+1},p_h^{n+1}) \norm+ \str(\velfh^{n},p_h^{n})\norm \right) \quad  \textrm{on} \; \Gamma. \label{stresses}
\end{align}
Now, we can write $\mathcal{I}$ as
\begin{gather}
\mathcal{I} =\frac{\Delta t^2}{\rho_s \epsilon} \int_{\Gamma} \str(\velfh^{n},p_h^{n}) \norm \cdot \left( \str(\velfh^{n},p_h^{n}) \norm  -\str(\velfh^{n+1},p_h^{n+1}) \norm \right)dS   \notag \\
= \frac{\Delta t^2}{2 \rho_s \epsilon} \left( \n\str(\velfh^{n},p_h^{n}) \norm\n^2_{L^2(\Gamma)}- \n\str(\velfh^{n+1},p_h^{n+1}) \norm\n^2_{L^2(\Gamma)}
 + \n\str(\velfh^{n},p_h^{n}) \norm-\str(\velfh^{n+1},p_h^{n+1}) \norm \n^2_{L^2(\Gamma)}\right)
\notag \\
= \frac{\Delta t^2}{2 \rho_s \epsilon} \left( \n\str(\velfh^{n},p_h^{n}) \norm\n^2_{L^2(\Gamma)}- \n\str(\velfh^{n+1},p_h^{n+1}) \norm\n^2_{L^2(\Gamma)} \right)+\frac{\rho_s \epsilon}{2}\n\disvh^{n+1}-\disvth^{n+1} \n^2_{L^2(\Gamma)}. \label{Iest}
\end{gather}
Employing identity~\eqref{Iest} and summing from $n=0$ to $N-1$, we obtain
  \begin{gather}
  \mathcal{E}_f(\velfh^N)+\mathcal{E}_v(\disvh^N)+\mathcal{E}_s(\displh^N)
  + \frac{\Delta t^2}{2 \rho_s \epsilon} \n\str(\velfh^{N},p_h^{N}) \norm\n^2_{L^2(\Gamma)}
  +\frac{\rho_f \Delta t^2}{2}\sum_{n=0}^{N-1} \n d_t \velfh^{n+1} \n^2_{L^2(\domf)}  
    +\frac{\Delta t^2}{2}\sum_{n=0}^{N-1}\n d_t\displh^{n+1}\n^2_S
  \notag \\
 + 2 \mu \Delta t \sum_{n=0}^{N-1}\n \du(\velfh^{n+1}) \n^2_{L^2(\domf)}    
  +\frac{\rho_s \epsilon}{2}\sum_{n=0}^{N-1}\n \disvth^{n+1}-\disvh^{n} \n^2_{L^2(\Gamma)}
=   \mathcal{E}_f(\velfh^0)+\mathcal{E}_v(\disvh^0)
+\mathcal{E}_s(\displh^0) 
\notag\\
 +  \frac{\Delta t^2}{2 \rho_s \epsilon} \n\str(\velfh^{0},p_h^{0}) \norm\n^2_{L^2(\Gamma)}+ \Delta t \sum_{n=0}^{N-1} \int_{\Sigma} p_{in/out}(t^{n+1}) \velfh^{n+1}\cdot\norm dS.\label{stabn-1}
  \end{gather}
Using the Cauchy-Schwarz, the trace, and the Korn inequalities, we can estimate 
\begin{align} 
&\bigg|\Delta t \int_{\Sigma} p_{in/out}(t^{n+1}) \velfh^{n+1}\cdot\norm dS \bigg| \le \frac{C \Delta t}{4 \mu}\n p_{in/out}(t^{n+1})\n^2_{L^2(\Sigma)}+\mu \Delta t \n \du(\velfh^{n+1}) \n^2_{L^2(\domf)}.
\end{align} 
  Combining the latter estimates with equation~\eqref{stabn-1} we prove the desired energy inequality.
 \end{proof}

\section{Error Analysis}~\label{Sec:Convergence}
In this section, we analyze the convergence rate of the kinematically coupled $\beta$ scheme~\eqref{S1discrete}-\eqref{S2discrete} when $\beta=1$.
We assume that the true solution satisfies the following assumptions: 
\begin{align}
\velf &\in  H^1(0,T; H^{k+1}(\domf))\cap H^2(0,T; L^2(\domf)),\;\; \velf|_{\Gamma} \in H^1(0,T; H^{k+1}(\Gamma)), \label{reg1}\\
p & \in L^2(0,T; H^{s+1}(\domf)),\;\;  p|_{\Gamma} \in H^{1}(0,T; H^{s+1}(\Gamma)),\;\; \\
\displ & \in W^{1,\infty}(0,T; H^{k+1}(\g)) \cap H^2(0,T; H^{k+1}(\g))\cap H^3(0,T; L^2(\g)). \label{reg2}
\end{align}
To approximate the problem in space, we apply the Lagrangian finite elements of polynomial degree $k$ for all the variables, except for the fluid pressure, for which we use elements of degree $s<k$.  
  We assume that our finite element spaces satisfy the usual approximation properties, and that the fluid velocity-pressure spaces  satisfy the discrete \emph{inf-sup} condition. We introduce the following time discrete norms:
 \begin{equation}
  \n\tf\n_{L^2(0,T; X)} = \bigg(\Delta t \sum_{n=0}^{N-1} \n\tf^{n+1}\n^2_{X} \bigg)^{1/2}, \quad \n\tf \n_{L^{\infty}(0,T; X)} = \max_{0 \le n \le N} \n\tf^n \n_{X}, \label{tdiscnorm}
 \end{equation}
where $X \in \{H^k(\domf), H^k(\g),  F, S\}$,  where norm $\|.\|_F$ is defined below equation \eqref{FluidBilinear}, norm $\|.\|_S$ is defined below equation~\eqref{strucinit}.  Note that they are equivalent to the continuous norms since we use piecewise constant approximations in time.

Let $I_h$ be the Lagrangian interpolation operator onto $V^s_h,$ and
$R_h$ be the  Ritz projector onto  $V_h^s$ such that for all $\displ \in V^s$
\begin{equation}
a_s(\displ - R_h \displ, \tsh) = 0  \quad \forall \tsh \in V_h^s. \label{Ritz}
\end{equation}
Then, the finite element theory for Lagrangian and Ritz projections~\cite{ciarlet1978finite} gives, respectively,
\begin{equation}
\n \disv - I_h \disv\n_{L^2(\g)} + h \n \disv - I_h \disv\n_{H^1(\g)} \ltt h^{k+1} \n \disv \n_{H^{k+1}(\g)}, \quad \forall \disv \in V^s, \label{app1}
\end{equation}
and 
\begin{equation}
\n \displ - R_h \displ \n_{S}  \ltt  h^{k} \n \displ \n_{H^{k+1}(\g)}, \quad \forall \displ \in V^s.
\end{equation}
Let $\Pi_h$ be a projection operator onto $Q_h^f$ such that
\begin{equation}
\n p - \Pi_h p \n_{L^2(\domf)} \ltt  h^{s+1} \n p \n_{H^{s+1}(\domf)}, \quad \forall p \in Q^f.
\end{equation}
Following the approach in~\cite{Fernandez3}, we introduce a Stokes-like projection operator $(S_h, P_h): V^f \rightarrow V^f_h \times Q^f_h$, defined for all $\velf \in V^f$ by
\begin{align}
& (S_h \velf, P_h \velf) \in V^f_h \times Q^f_h, \\
& (S_h \velf)|_{\g} = I_h (\velf|_{\g}),  \\
& a_f(S_h \velf, \tfh)-b(P_h \velf, \tfh) = a_f(\velf, \tfh),\quad \forall \tfh \in V^f_h \; \textrm{such that} \; \tfh|_{\g}=0,\\
& b(\tph, S_h \velf) = 0, \quad \forall \tph \in Q^f_h. \label{press_proj}
\end{align} 
Projection operator $S_h$ satisfies the following approximation properties (see~\cite{Fernandez3}, Theorem B.5):
\begin{equation}
\n \velf - S_h \velf\n_{F} \ltt h^k \n \velf \n_{H^{k+1}(\domf)}. \label{app2}
\end{equation}

We assume that the continuous fluid velocity lives in the space $V^{fd}=\{\velf \in V^f | \; \nabla \cdot \velf =0\}$. Since the test functions for the partitioned scheme do not satisfy the kinematic coupling condition, we start by deriving the monolithic variational formulation with the test functions in $V_h^{f}\times V_h^s \times Q_h^f$: Find $(\velf, \displ, p) \in  V^{fd}  \times V^s \times Q^f$ with $\velf^{n+1} = \partial_t \displ^{n+1}$ on $\g$ such that for all $(\tfh, \tsh, \tph) \in V_h^{f}\times V_h^s \times Q_h^f$ we have
\begin{gather}
\rho_f \int_{\domf} \dt \velf^{n+1} \cdot \tfh \dx +a_f(\velf^{n+1},\tfh)-b(p^{n+1}, \tfh) +  \rho_{s} \epsilon\int_{\g} \dtt \displ^{n+1} \cdot  \tsh dS   +a_s (\displ^{n+1}, \tsh) \notag\\
 = \int_{\Sigma} p_{in/out}(t^{n+1}) \tfh \cdot \norm  dS + \int_{\g} \str(\velf^{n+1},p^{n+1}) \norm (\tfh - \tsh) dS. 
\end{gather}
Notice that here the fluid and the structure test functions are independent, i.e. we do not satisfy condition $(\tfh)_{|\Gamma}=\tsh$.
Introducing variables $\disv^{n+1}=\partial_t \displ^{n+1}$ and $\disvt^{n+1} = \velf^{n+1}|_{\g}$, we can rewrite the structure acceleration term as follows
\begin{gather}
 \rho_{s} \epsilon\int_{\g} \dtt \displ^{n+1} \cdot  \tsh dS =  \rho_{s} \epsilon\int_{\g} \dt \disv^{n+1} \cdot  \tsh dS 
 =  \rho_{s} \epsilon\int_{\g} \frac{\disv^{n+1} -\disvt^{n+1}}{\Delta t} \cdot  \tsh dS \notag \\
 +\rho_{s} \epsilon\int_{\g} \frac{\disvt^{n+1} -\disv^{n}}{\Delta t} \cdot  \tsh dS + \rho_{s} \epsilon\int_{\g} (\dt \disv^{n+1}- d_t \disv^{n+1}) \cdot \tsh dS. \label{accmon}
\end{gather}
Taking into account the latter equation, the weak formulation of the monolithic problem can be written as
\begin{gather}
\rho_f \int_{\domf} d_t \velf^{n+1} \cdot \tfh \dx +a_f(\velf^{n+1},\tfh)-b(p^{n+1}, \tfh)  +a_s (\displ^{n+1}, \tsh)+  \rho_{s} \epsilon\int_{\g} \frac{\disvt^{n+1} -\disv^{n}}{\Delta t} \cdot  \tsh dS 
 \notag\\
+\rho_{s} \epsilon\int_{\g} \frac{\disv^{n+1} -\disvt^{n+1}}{\Delta t} \cdot  \tsh dS  
 =\rho_f \int_{\domf} (d_t \velf^{n+1}-\dt \velf^{n+1}) \cdot \tfh \dx
   +\rho_{s} \epsilon\int_{\g} (d_t \disv^{n+1} - \dt \disv^{n+1}) \cdot \tsh dS
   \notag \\
+ \int_{\g} \str(\velf^{n+1},p^{n+1}) \norm \cdot(\tfh - \tsh) dS+ \int_{\Sigma} p_{in/out}(t^{n+1}) \tfh \cdot \norm  dS. \label{weakmonolithicerror}
\end{gather}
To analyze the error of our numerical scheme, we start by subtracting~\eqref{S1discrete}-\eqref{S2discrete} from~\eqref{weakmonolithicerror}, giving rise to the following error equations:
\begin{gather}
\rho_f \int_{\domf} d_t (\velf^{n+1}-\velfh^{n+1}) \cdot \tfh \dx +a_f(\velf^{n+1}-\velfh^{n+1},\tfh)-b(p^{n+1}-p_h^{n+1}, \tfh)-b(\tph, \velfh^{n+1}) 
   +a_s(\displ^{n+1}-\displh^{n+1}, \tsh) 
   \notag\\
+  \rho_{s} \epsilon\int_{\g} \left(\frac{\disvt^{n+1} -\disv^{n}}{\Delta t}-\frac{\disvth^{n+1} -\disvh^{n}}{\Delta t}\right) \cdot  \tsh dS  
 +  \rho_{s} \epsilon\int_{\g} \left(\frac{\disv^{n+1} -\disvt^{n+1}}{\Delta t}-\frac{\disvh^{n+1} -\disvth^{n+1}}{\Delta t}\right) \cdot  \tvh dS
 \notag \\
   - \int_{\g} (\str(\velf^{n},p^{n}) \norm -\str(\velfh^{n},p_h^{n}) \norm) \cdot(\tvh - \tsh) dS
= \mathcal{R}^f(\tfh)+\mathcal{R}^s(\tsh)+\mathcal{R}^{os}(\tvh - \tsh), 
\label{erroreq}
\end{gather}
for all $(\tfh, \tvh, \tsh) \in V_h^f \times V_h^s \times V_h^s $ such that $\tfh|_{\g} = \tvh,$
where
\begin{align}
& \mathcal{R}^f(\tfh)  = \rho_f \int_{\domf} (d_t \velf^{n+1}-\dt \velf^{n+1}) \cdot \tfh \dx  \\
& \mathcal{R}^s(\tsh)  = \rho_{s} \epsilon\int_{\g} (d_t \disv^{n+1} - \dt \disv^{n+1}) \cdot \tsh dS, \\
& \mathcal{R}^{os}(\tvh - \tsh)  = \int_{\g} \left(\str(\velf^{n+1},p^{n+1}) \norm-\str(\velf^{n},p^{n})\norm +\frac{\disv^{n+1} -\disvt^{n+1}}{\Delta t} \right)  \cdot(\tvh - \tsh) dS.
\end{align}
Note that the last term accounts for the operator-splitting error. Since $\disvt^{n+1} = \velf^{n+1}|_{\g} = \dt \displ^{n+1} = \disv^{n+1}$, we have
\begin{gather}
 \mathcal{R}^{os}(\tvh - \tsh)  = \int_{\g} \left(\str(\velf^{n+1},p^{n+1}) \norm-\str(\velf^{n},p^{n})\norm \right)  \cdot(\tvh - \tsh) dS.
\end{gather}
We split the error of the method as a sum of the approximation error $\theta_r^{n+1}$ and the truncation error $\delta_r^{n+1}$, for $r \in \{f,\tilde{v},p,s,v\}$ as follows
\begin{align}
e_f^{n+1} & =\velf^{n+1}-\velfh^{n+1} = (\velf^{n+1}-S_h \velf^{n+1})+(S_h \velf^{n+1}-\velfh^{n+1}) = \thf^{n+1}+\defl^{n+1}, \label{error1}\\
e_{\tilde{v}}^{n+1} & =\disvt^{n+1}-\disvth^{n+1} = (\disvt^{n+1}-I_h \disvt^{n+1})+(I_h \disvt^{n+1}-\disvth^{n+1}) = \thvt^{n+1}+\devt^{n+1},\\
e_p^{n+1} & =p^{n+1}-p_h^{n+1} = (p^{n+1}-\Pi_h p^{n+1})+(\Pi_h p^{n+1}-p_h^{n+1}) = \thp^{n+1}+\dep^{n+1}, \\
e_s^{n+1} & =\displ^{n+1}-\displh^{n+1} = (\displ^{n+1}-R_h \displ^{n+1})+(R_h \displ^{n+1}-\displh^{n+1}) = \ths^{n+1}+\des^{n+1}, \\
e_v^{n+1} & =\disv^{n+1}-\disvh^{n+1} = (\disv^{n+1}-I_h \disv^{n+1})+(I_h \disv^{n+1}-\disvh^{n+1}) = \thv^{n+1}+\dev^{n+1}. \label{error2}
\end{align}
The main result of this section is stated in the following theorem.

\begin{theorem}\label{MainThm}
Consider the solution $(\velfh, p_h, \disvth, \disvh, \displh)$ of~\eqref{S1discrete}-\eqref{S2discrete}, with discrete initial data
$(\velfh^0, p_h^0, \disvth^0, \disvh^0, \displh^0) = (S_h \velf^0, \Pi_h p^0, I_h \disvt^0, I_h \disv^0, R_h \displ^0)$. Assume that $\beta=1$ and the exact solution satisfies assumptions~\eqref{reg1}-\eqref{reg2}.  Furthermore, we assume that
$$\gamma \Delta t <1, \qquad  \gamma_1 < \displaystyle\frac{\rho_s \epsilon}{8 \Delta t}, \qquad \gamma_2<\frac{1}{4},$$
 where $\gamma>0, \gamma_1>0, \gamma_2>0$. Let $\tilde{\gamma}=\max\{\gamma,\gamma_2,\gamma_3\}.$
 Then, the following estimate holds
\begin{gather}
\n \velf^N - \velfh^N\n_{L^2(\domf)}
+ \n \velf^N - \velfh^N\n_{L^2(0,T;F)}
+\n \disv^N- \disvh^N\n_{L^2(\Gamma)}
+\n \displ^N-\displh^N\n_{S}
+\n \str(\velf^N,p^N)\norm-\str(\velfh^N, p_h^N)\norm \n_{L^2(\g)}
\notag \\ 
\ltt  e^{\tilde{\gamma} T} \left(\Delta t \mathcal{A}_1 +\Delta t^2 \Big(\Delta t^{1/2}+\frac{ 1}{ \gamma_2 } + \frac{1}{\gamma_1} +\gamma_1 \Delta t \Big) \mathcal{A}_2 + h^{k} \mathcal{B}_1 +h^{k+1} \mathcal{B}_2 +h^{s+1} \mathcal{B}_3 \right.
\notag \\
\left.
+ \Delta t h^{k} \Big(\Delta t+\frac{ 1}{ \gamma_2 } + \frac{1}{\gamma_1} +\gamma_1 \Delta t^2 \Big) \mathcal{C}_1
+ \Delta t h^{s+1} \Big(\Delta t+\frac{ 1}{ \gamma_2 } + \frac{1}{\gamma_1} +\gamma_1 \Delta t^2 \Big) \mathcal{C}_2, \notag
\right)
  \end{gather}
  where norm $\|.\|_F$ is defined below equation \eqref{FluidBilinear}, norm $\|.\|_S$ is defined below equation~\eqref{strucinit} and
  \begin{align}
  \mathcal{A}_1 &=    \n \dtt \velf \n_{L^2(0,T,L^2(\domf))} +\frac{1}{\gamma} \n \dtt \disv\n_{L^2(0,T;L^2(\Gamma))}+\frac{1}{\gamma} \n \dtt \displ\n_{L^2(0,T; H^1(\Gamma))}+\frac{1}{\gamma_1}  \n \partial_t  \str \norm \n_{L^2(0,T; L^2(\Gamma))}, \notag \\
    \mathcal{A}_2 &=    \n \partial_t  \str \norm \n_{L^2(0,T; L^2(\g))}, \notag \\
  \mathcal{B}_1 &= \frac{1}{\gamma}  \| \disv \|_{L^2(0,T;H^{k+1}(\Gamma))}
+ \n\partial_t \velf \n_{L^2(0,T;H^{k+1}(\domf))}
+ \n \velf \n_{L^2(0,T;H^{k+1}(\domf))}
  +\frac{1}{ \gamma_1 }  \n\velf \n_{L^2(0,T;H^{k+1}(\g))} 
  +\| \velf\|_{L^{\infty}(0,T; H^{k+1}(\domf))}
  \notag \\
 &    +\| \velf\|_{L^{\infty}(0,T; H^{k+1}(\g))}
    +\| \displ \|_{L^{\infty}(0,T; H^{k+1}(\g))},
\qquad    
 \mathcal{B}_2 = \left(1+\frac{1}{\gamma_1} \right)  \n\partial_t \disv\n_{L^2(0,T;H^{k+1}(\Gamma))}  +\| \disv \|_{L^{\infty}(0,T; H^{k+1}(\g))}, 
  \notag \\
 \mathcal{B}_3 &=\n p \n^2_{L^2(0,T; H^{s+1}(\domf))}+\frac{1}{ \gamma_1 } \n p \n^2_{L^2(0,T; H^{s+1}(\g))}  +\|p \|_{L^{\infty}(0,T; H^{s+1}(\g))}, \notag \\
  \mathcal{C}_1 &= \n\partial_t \velf \n^2_{L^2(0,T;H^{k+1}(\g))}, \qquad
   \mathcal{C}_2 =\n\partial_t p \n^2_{L^2(0,T;H^{s+1}(\g))}. \notag
  \end{align}
\end{theorem}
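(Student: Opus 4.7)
\medskip

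\noindent\textbf{Proof proposal.} My plan is to work from the error equation~\eqref{erroreq}, testing it with the truncation-error components $(\tfh,\tvh,\tsh,\tph)=(\defl^{n+1},\dev^{n+1},\devt^{n+1},\dep^{n+1})$, which is admissible because $\defl^{n+1}|_{\g}=\dev^{n+1}$ by the definition of the Stokes-like projection combined with the Lagrange interpolant and the fact that $\disvt^{n+1}=\disv^{n+1}$ for the continuous solution. The structure equation~\eqref{S1discrete}-type term will be tested with $\devt^{n+1}$. With these choices, and after expanding each $e_r^{n+1}=\theta_r^{n+1}+\delta_r^{n+1}$, the left-hand side yields the ``discrete energy'' of the truncation errors: a kinetic fluid contribution $\tfrac{\rho_f}{2}d_t\|\defl^{n+1}\|_{L^2(\Omega)}^2$, a viscous dissipation $2\mu\|\du(\defl^{n+1})\|_{L^2(\Omega)}^2$, a structure kinetic term $\tfrac{\rho_s\epsilon}{2}d_t\|\dev^{n+1}\|_{L^2(\Gamma)}^2$, and an elastic term $\tfrac{1}{2}d_t\|\des^{n+1}\|_S^2$, together with positive ``jump'' terms coming from identity~\eqref{identity}. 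The Ritz projection property~\eqref{Ritz} kills the cross term $a_s(\ths^{n+1},\devt^{n+1})$, the pressure-projection property $b(\dep^{n+1},S_h\velf^{n+1})=0$ in~\eqref{press_proj} eliminates the corresponding approximation error, and the Stokes-like projection removes the other undesirable fluid cross terms, mirroring~\cite{Fernandez3}.

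\medskip

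\noindent The resulting inequality has the schematic form ``discrete energy difference $+$ dissipation $=$ $\mathcal{R}^f(\defl^{n+1})+\mathcal{R}^s(\devt^{n+1})+\mathcal{R}^{os}(\dev^{n+1}-\devt^{n+1})+$ approximation residuals.'' The two consistency residuals $\mathcal{R}^f,\mathcal{R}^s$ are treated by standard Taylor-expansion bounds that produce $\Delta t^2\|\dtt\velf\|_{L^2(0,T;L^2(\Omega))}^2$ and $\Delta t^2\|\dtt\disv\|_{L^2(0,T;L^2(\Gamma))}^2$ contributions, which match $\mathcal{A}_1$. The approximation-error terms arising when substituting $e_r=\theta_r+\delta_r$ are bounded by the interpolation estimates~\eqref{app1}-\eqref{app2}, and an additional $\Delta t\,\|d_t\thf^{n+1}\|^2$-type piece is converted to an $h^k\|\partial_t\velf\|_{L^2(0,T;H^{k+1})}$ contribution by writing $d_t\thf^{n+1}=\Delta t^{-1}\int_{t_n}^{t_{n+1}}\partial_t(\velf-S_h\velf)\,dt$. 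Young's inequality with parameters $\gamma$, $\gamma_1$, $\gamma_2$ produces the prefactors recorded in the statement, while the smallness conditions $\gamma_1<\rho_s\epsilon/(8\Delta t)$ and $\gamma_2<1/4$ absorb the dangerous quantities back into the left-hand side.

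\medskip

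\noindent The principal obstacle is the splitting-error term $\mathcal{R}^{os}(\dev^{n+1}-\devt^{n+1})$, which is what forces the sub-optimal rate when $\beta<1$. Here I exploit the $\beta=1$ choice exactly as in Section~\ref{sec:Stability}: summing~\eqref{Step1F1} and~\eqref{robin1} yields the discrete identity $\dev^{n+1}-\devt^{n+1}=\frac{\Delta t}{\rho_s\epsilon}\bigl(-\str(\velfh^{n+1},p_h^{n+1})\norm+\str(\velfh^n,p_h^n)\norm\bigr)$ on $\g$, together with the expansion $\str(\velf^{n+1},p^{n+1})\norm-\str(\velf^n,p^n)\norm=\int_{t_n}^{t_{n+1}}\partial_t(\str\norm)\,dt$. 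Combining these, $\mathcal{R}^{os}(\dev^{n+1}-\devt^{n+1})$ becomes a sum of a telescoping piece of the form $\frac{\Delta t^2}{2\rho_s\epsilon}d_t\|\str\norm\|_{L^2(\g)}^2$ applied to error stresses (which produces the $\|\str(\velf^N,p^N)\norm-\str(\velfh^N,p_h^N)\norm\|_{L^2(\g)}$ term in the conclusion), plus a quadratic remainder controlled by $\Delta t^3\|\partial_t\str\norm\|_{L^2(0,T;L^2(\g))}^2$ (giving $\mathcal{A}_2$ and the $h$-dependent $\mathcal{C}_1,\mathcal{C}_2$ after inserting $\theta_r$'s via the trace inequality on $\g$). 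The interface approximation errors are separated using a trace inequality $\|\cdot\|_{L^2(\g)}\lesssim\|\cdot\|_{H^{k+1}(\g)}$ on the appropriate restrictions, explaining the appearance of $\|\velf\|_{L^\infty(0,T;H^{k+1}(\g))}$ and $\|p\|_{L^\infty(0,T;H^{s+1}(\g))}$ in $\mathcal{B}_1,\mathcal{B}_3$.

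\medskip

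\noindent With the right-hand side thus decomposed, I sum over $n=0,\dots,N-1$, observe that the initial errors $\defl^0,\dev^0,\devt^0,\des^0$ all vanish by the prescribed discrete initial data, and apply the discrete Gronwall inequality under the condition $\gamma\Delta t<1$, giving the factor $e^{\tilde\gamma T}$. A final triangle inequality $\|e_r^N\|\le\|\theta_r^N\|+\|\delta_r^N\|$ combined once more with~\eqref{app1}-\eqref{app2} yields the stated bound. I expect the most delicate bookkeeping to be the stress-telescope step above, and in particular keeping the constants $\gamma_1$ and $\gamma_2$ in the ``right'' places so that the structure kinetic energy and the stress-jump energy are genuinely absorbed on the left-hand side; everything else is a careful but essentially mechanical combination of projection estimates, Taylor expansions, and trace/Korn inequalities.
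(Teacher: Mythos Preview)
Your overall strategy matches the paper's proof---same test functions, same use of the projection properties, same reliance on the discrete stress identity~\eqref{stresses} and the telescope trick, same Gronwall finish. Two places in your sketch are inaccurate, however, and would not work as written.

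First, testing $a_s(\des^{n+1},\cdot)$ with $\tsh=\devt^{n+1}$ does \emph{not} give $\tfrac12 d_t\|\des^{n+1}\|_S^2$ directly, because $\devt^{n+1}=I_h\disvt^{n+1}-\disvth^{n+1}$ while $d_t\des^{n+1}=R_h d_t\displ^{n+1}-\disvth^{n+1}$; the mismatch $I_h\disvt^{n+1}-R_h d_t\displ^{n+1}=-\thv^{n+1}+d_t\ths^{n+1}+(\partial_t\displ^{n+1}-d_t\displ^{n+1})$ survives. In the paper this produces an extra consistency residual $\mathcal{R}^e(\des^{n+1})=a_s(\des^{n+1},\partial_t\displ^{n+1}-d_t\displ^{n+1})$ and an interpolation term $\|\thv^{n+1}\|_S$, which are precisely the sources of $\tfrac1\gamma\|\partial_{tt}\displ\|_{L^2(0,T;H^1(\Gamma))}$ in $\mathcal{A}_1$ and $\tfrac1\gamma\|\disv\|_{L^2(0,T;H^{k+1}(\Gamma))}$ in $\mathcal{B}_1$. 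Without this step those contributions are unaccounted for.

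Second, the telescoping piece $\tfrac{\Delta t^2}{2\rho_s\epsilon}\bigl(\|\str(\defl^{N},\dep^{N})\norm\|^2-\|\str(\defl^{0},\dep^{0})\norm\|^2\bigr)$ does \emph{not} come from $\mathcal{R}^{os}$. It comes from the left-hand-side term $-\int_\Gamma\str(\defl^{n},\dep^{n})\norm\cdot(\dev^{n+1}-\devt^{n+1})\,dS$: substituting~\eqref{stresses} (after adding and subtracting the projected continuous solution, cf.~\eqref{trstres}) turns this into a product $\str(\defl^n,\dep^n)\norm\cdot(\str(\defl^n,\dep^n)\norm-\str(\defl^{n+1},\dep^{n+1})\norm)$ plus a cross term with the projected continuous stress increments, and the first part is the telescope via identity~\eqref{identity}. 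The consistency term $\mathcal{R}^{os}(\dev^{n+1}-\devt^{n+1})$ is handled separately in the paper by a straightforward Young's inequality with parameter $\gamma_1$, giving the $\tfrac{1}{\gamma_1}\|\partial_t\str\norm\|$ contribution in $\mathcal{A}_1$; it is not the source of the telescope. With these two corrections your outline becomes the paper's proof.
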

\begin{proof}
Due to property~\eqref{Ritz} of the Ritz projection operator,  we have $a_s(\ths^{n+1}, \tsh) = 0.$ 
Furthermore, since $\disvt^{n+1} = \disv^{n+1}$, we have $ \thvt^{n+1} -\thv^{n} = \thv^{n+1}-\thv^{n}$ 
and
$
\thv^{n+1} -\thvt^{n+1} =0.
$
Rearranging the error equation~\eqref{erroreq} and taking the properties above into account, we get
\begin{gather}
\rho_f \int_{\domf} d_t \defl^{n+1} \cdot \tfh \dx +a_f(\defl^{n+1},\tfh)-b(\dep^{n+1}, \tfh)-b(\tph, \velfh^{n+1})
  +a_s(\des^{n+1}, \tsh)  
+  \rho_{s} \epsilon\int_{\g} \frac{\devt^{n+1} -\dev^{n}}{\Delta t} \cdot  \tsh dS  \notag \\
 +  \rho_{s} \epsilon\int_{\g} \frac{\dev^{n+1} -\devt^{n+1}}{\Delta t} \cdot  \tvh dS
  - \int_{\g} \str(\defl^{n}, \dep^{n}) \norm \cdot(\tvh - \tsh) dS=\mathcal{R}^f(\tfh)+\mathcal{R}^s(\tsh) 
+\mathcal{R}^{os}(\tvh - \tsh) 
  \notag \\
 - \rho_f \int_{\domf} d_t \thf^{n+1} \cdot \tfh \dx
 -a_f(\thf^{n+1},\tfh)+b(\thp^{n+1}, \tfh) 
 -  \rho_{s} \epsilon\int_{\g}\frac{\thvt^{n+1} -\thv^{n}}{\Delta t} \cdot  \tsh dS 
 \notag \\
+\int_{\g} \str(\thf^{n}, \thp^{n}) \norm \cdot(\tvh - \tsh) dS, \label{error_s}
\end{gather}
for all $(\tfh, \tvh, \tsh) \in X_h^f \times V^s_h \times V_h^s$ such that $\tfh|_{\g} = \tvh$.
We proceed by choosing  test functions $\tfh = \defl^{n+1}, \tvh = \dev^{n+1}, \tph = \dep^{n+1}$ and $\tsh=\devt^{n+1}$. Thanks to~\eqref{press_proj}, the pressure terms simplify as follows
\begin{gather}
-b(\dep^{n+1}, \defl^{n+1})-b(\dep^{n+1}, \velfh^{n+1}) = -b(\dep^{n+1}, S_h \velf^{n+1}) = 0.
\end{gather}
Multiplying equation~\eqref{error_s} by $\Delta t$ and summing over $0 \le n \le N-1$, we get
\begin{gather}
\mathcal{E}_f(\defl^N)+\mathcal{E}_v(\dev^N) +\frac{\rho_f \Delta t^2}{2}\sum_{n=0}^{N-1}\n  d_t \defl^{n+1} \n^2_{L^2(\domf)}
+2 \mu \Delta t\sum_{n=0}^{N-1} \n \defl^{n+1} \n^2_{F}  
 +\frac{\rho_s \epsilon}{2}\sum_{n=0}^{N-1}\n \devt^{n+1}-\dev^{n} \n^2_{L^2(\g)}  
   \notag \\  
  +\frac{\rho_s \epsilon}{2}\sum_{n=0}^{N-1}\n \dev^{n+1}-\devt^{n+1} \n^2_{L^2(\g)}  
   +\Delta t \sum_{n=0}^{N-1} a_s(\des^{n+1}, \devt^{n+1})  
  - \Delta t \sum_{n=0}^{N-1} \int_{\g} \str(\defl^{n}, \dep^{n}) \norm \cdot( \dev^{n+1} -\devt^{n+1}) dS
  =   \mathcal{E}_f(\defl^0) +\mathcal{E}_v(\dev^0)
    \notag \\  
  +\Delta t \sum_{n=0}^{N-1} \big(\mathcal{R}^f(\defl^{n+1})+\mathcal{R}^s(\dev^{n+1})+\mathcal{R}^{os}(  \dev^{n+1} - \devt^{n+1})\big)
-\rho_f \Delta t \sum_{n=0}^{N-1} \int_{\domf} d_t \thf^{n+1} \cdot \defl^{n+1} \dx 
 -\Delta t \sum_{n=0}^{N-1}a_f(\thf^{n+1},\defl^{n+1})
  \notag \\ 
  +\Delta t \sum_{n=0}^{N-1}b(\thp^{n+1}, \defl^{n+1}) 
 - \rho_{s} \epsilon \Delta t \sum_{n=0}^{N-1}\int_{\g} d_t \thv^{n+1}  \cdot  \devt^{n+1} dS  
 +\Delta t  \sum_{n=0}^{N-1}\int_{\g} \str(\thf^{n}, \thp^{n}) \norm \cdot( \dev^{n+1} -\devt^{n+1}) dS. \label{error_eng}
  \end{gather}
 For the term $\Delta t \sum_{n=0}^{N-1} a_s(\des^{n+1}, \devt^{n+1}),$ we proceed as follows 
 \begin{gather*}
 \Delta t \sum_{n=0}^{N-1} a_s(\des^{n+1}, \devt^{n+1}) = \Delta t \sum_{n=0}^{N-1} a_s(\des^{n+1}, d_t \des^{n+1}+I_h \disvt^{n+1}-R_h d_t \displ^{n+1})=\mathcal{E}_s(\des^N) -\mathcal{E}_s(\des^0)  
   \notag \\
    +\frac{\Delta t^2}{2}\sum_{n=0}^{N-1}\n d_t\des^{n+1}\n^2_S
 + \Delta t \sum_{n=0}^{N-1} a_s(\des^{n+1}, I_h \disvt^{n+1}-R_h d_t \displ^{n+1}).
 \end{gather*}  
 Note that, since $\disvt^{n+1}=\disv^{n+1}$, $I_h \disvt^{n+1}-R_h d_t \displ^{n+1} = I_h \disv^{n+1}-\disv^{n+1}+\disv^{n+1}-R_h d_t \displ^{n+1} = -\thv^{n+1}+d_t \ths^{n+1}+\dt \displ^{n+1}-d_t \displ^{n+1}.$ Hence, using property~\eqref{Ritz} of the Ritz projection operator, Cauchy-Schwartz and Young's inequalities, we have
 \begin{align}
 \Delta t \sum_{n=0}^{N-1} a_s(\des^{n+1}, I_h \disvt^{n+1}-R_h d_t \displ^{n+1}) \ltt \frac{\Delta t}{\gamma}\sum_{n=0}^{N-1} \| \thv^{n+1} \|^2_S+\frac{\Delta t \gamma}{4}\sum_{n=0}^{N-1} \| \des^{n+1}\|^2_S+\Delta t \sum_{n=0}^{N-1} \mathcal{R}^e(\des^{n+1})
 \end{align}
 for $\gamma>0,$ where $\mathcal{R}^e(\des^{n+1}) = a_s(\des^{n+1}, \dt \displ^{n+1}-d_t \displ^{n+1})$.
 
 To estimate the last term on the left hand side of~\eqref{error_eng}, we note that $ \dev^{n+1} -\devt^{n+1} = -(\disvh^{n+1}-\disvth^{n+1})$. Furthermore, adding and subtracting the continuous velocity and pressure on the right hand side of~\eqref{stresses} , we have
 \begin{align}
&  \dev^{n+1} -\devt^{n+1}= - \frac{\Delta t}{\rho_s \epsilon}\left( -\str(S_h\velf^{n+1} - \defl^{n+1},\Pi_h p^{n+1} - \dep^{n+1}) \norm+ \str(S_h\velf^{n} - \defl^{n},\Pi_h p^{n} - \dep^{n})\norm \right) \quad  \textrm{on} \; \Gamma. \label{trstres}
\end{align}
Employing the identity above, we have
  \begin{gather}
 - \Delta t \sum_{n=0}^{N-1} \int_{\g} \str(\defl^{n}, \dep^{n}) \norm \cdot( \dev^{n+1} -\devt^{n+1}) dS = \underbrace{ - \frac{\Delta t^2}{\rho_s \epsilon} \sum_{n=0}^{N-1} \int_{\g} \str(\defl^{n}, \dep^{n}) \norm \cdot( -\str(\defl^{n+1},\dep^{n+1}) \norm+\str(\defl^{n}, \dep^n) \norm) dS }_{\mathcal{T}_1} 
 \notag \\
  +\underbrace{ \frac{\Delta t^2}{\rho_s \epsilon} \sum_{n=0}^{N-1} \int_{\g} \str(\defl^{n}, \dep^{n}) \norm \cdot \left( -\str\left(S_h (\velf^{n+1}-\velf^{n}), \Pi_h (p^{n+1}-p^n) \right) \norm \right) dS.}_{\mathcal{T}_2.}  
 \end{gather}
To estimate term $\mathcal{T}_1$, we apply identity~\eqref{identity} as follows
\begin{gather}
\mathcal{T}_1= - \frac{\Delta t^2}{\rho_s \epsilon} \sum_{n=0}^{N-1} \int_{\g} \str(\defl^{n}, \dep^{n}) \norm \cdot( -\str(\defl^{n+1},\dep^{n+1}) \norm+\str(\defl^{n}, \dep^n))  \norm dS = 
   \frac{\Delta t^2}{2 \rho_s \epsilon}  \| \str(\defl^{N}, \dep^{N}) \norm \|^2_{L^2(\Gamma)}
   \notag \\
  - \frac{\Delta t^2}{2 \rho_s \epsilon} \| \str(\defl^{0}, \dep^{0}) \norm \|^2_{L^2(\Gamma)}
   -\frac{\Delta t^2}{2 \rho_s \epsilon} \sum_{n=0}^{N-1} \| \str(\defl^{n+1}, \dep^{n+1}) \norm -\str(\defl^{n}, \dep^{n}) \norm \|^2_{L^2(\Gamma)}.
\end{gather}
To estimate the last term in the equation above, we again use identity~\eqref{trstres},\eqref{stresses} and Young's inequality with $\gamma_1>0$ as follows
\begin{gather}
    \frac{\Delta t^2}{2 \rho_s \epsilon} \sum_{n=0}^{N-1} \| \str(\defl^{n+1}, \dep^{n+1}) \norm -\str(\defl^{n}, \dep^{n}) \norm \|^2_{L^2(\Gamma)}
    =
       \frac{\Delta t^2}{2 \rho_s \epsilon} \sum_{n=0}^{N-1}  \| \str\left(S_h (\velf^{n+1}-\velf^{n}), \Pi_h (p^{n+1}-p^n) \norm \right)
       \notag \\
      +\frac{\rho_s \epsilon}{\Delta t} (\devt^{n+1}-\dev^{n+1})   \|^2_{L^2(\Gamma)}
       =       \frac{\Delta t^2}{2 \rho_s \epsilon} \sum_{n=0}^{N-1}  \|\str\left(S_h (\velf^{n+1}-\velf^{n}), \Pi_h (p^{n+1}-p^n) \norm \right) \|^2_{L^2(\Gamma)} 
    \notag   \\
  +   \frac{\rho_s \epsilon}{2} \sum_{n=0}^{N-1}  \|\devt^{n+1}-\dev^{n+1}\|^2_{L^2(\Gamma)}
            + \Delta t \sum_{n=0}^{N-1} \int_{\Gamma} (\devt^{n+1}-\dev^{n+1})\str\left(S_h (\velf^{n+1}-\velf^{n}), \Pi_h (p^{n+1}-p^n) \norm \right)dS
            \notag \\
        \leq    
           \frac{\Delta t^2}{2 \rho_s \epsilon} \sum_{n=0}^{N-1}  \|\str\left(S_h (\velf^{n+1}-\velf^{n}), \Pi_h (p^{n+1}-p^n) \norm \right) \|^2_{L^2(\Gamma)} 
             +   \frac{\rho_s \epsilon}{2} \sum_{n=0}^{N-1}  \|\devt^{n+1}-\dev^{n+1}\|^2_{L^2(\Gamma)}
    \notag   \\
            +\frac{\gamma_1 \Delta t}{8} \sum_{n=0}^{N-1}  \|\devt^{n+1}-\dev^{n+1} \|^2_{L^2(\Gamma)}
                +\frac{2\Delta t}{\gamma_1} \sum_{n=0}^{N-1} \|\str\left(S_h (\velf^{n+1}-\velf^{n}), \Pi_h (p^{n+1}-p^n) \norm \right)\|^2_{L^2(\Gamma)}.
\end{gather}
Finally, we estimate $\mathcal{T}_2$ using the Cauchy-Schwartz inequality and Young's inequality with $\gamma_2>0$ as
\begin{gather}
\frac{\Delta t^2}{\rho_s \epsilon} \sum_{n=0}^{N-1} \int_{\g} \str(\defl^{n}, \dep^{n}) \norm \cdot \left( -\str\left(S_h (\velf^{n+1}-\velf^{n}), \Pi_h (p^{n+1}-p^n) \right) 
\norm \right) dS
\notag \\
\leq
\frac{\gamma_2  \Delta t^3}{2 \rho_s \epsilon} \sum_{n=0}^{N-1} \|\str(\defl^{n}, \dep^{n}) \norm \|^2_{L^2(\Gamma)}
+\frac{  \Delta t}{2 \gamma_2 \rho_s \epsilon} \left\| -\str\left(S_h (\velf^{n+1}-\velf^{n}), \Pi_h (p^{n+1}-p^n) \right) 
\norm \right\|^2_{L^2(\Gamma)}.
\end{gather}
 We bound the right hand side of \eqref{error_eng} as follows. Using Cauchy-Schwartz, Young's, Poincar\'e - Friedrichs, and Korn's inequalities, we have the following:
\begin{gather*}
 -\rho_f \Delta t \sum_{n=0}^{N-1}\int_{\domf} d_t\thf^{n+1} \cdot \defl^{n+1} d\boldsymbol x 
  - \Delta t \sum_{n=0}^{N-1} a_f(\thf^{n +1},\defl^{n+1})
+   \Delta t \sum_{n=0}^{N-1} b(\thp^{n+1}, \defl^{n+1}) \\
  \ltt \frac{ \Delta t \rho_f^2}{\mu} \sum_{n=0}^{N-1} \| d_t  \thf^{n+1}\|^2_{L^2(\domf)} + \Delta t \mu \sum_{n=0}^{N-1} \n  \thf^{n+1}\n^2_{F}+\frac{ \Delta t }{\mu}  \sum_{n=0}^{N-1} \n\thp^{n+1}\n^2_{L^2(\Omega)}+\frac{\mu \Delta t}{2}\sum_{n=0}^{N-1}\n\defl^{n+1} \n^2_{F}. 
\end{gather*}
 Manipulating the next couple of terms and using the Cauchy-Schwartz, Poincar\'e - Friedrichs, Korn's and Young's inequalities with $\gamma_1>0$, we get
 \begin{gather*}
- \rho_{s}\Delta t \epsilon \sum_{n=0}^{N-1} \int_{\g}  d_t \thv^{n+1}  \cdot  \devt^{n+1} dS
+ \Delta t  \sum_{n=0}^{N-1}\int_{\g} \str(\thf^{n}, \thp^{n}) \norm \cdot(\devt^{n+1} - \dev^{n+1}) dx
\\
=\rho_{s}\Delta t \epsilon  \sum_{n=0}^{N-1} \int_{\g}  d_t \thv^{n+1}  \cdot \left( (\dev^{n+1}-\devt^{n+1}) - \dev^{n+1} \right) dS 
+ \Delta t  \sum_{n=0}^{N-1}\int_{\g} \str(\thf^{n}, \thp^{n}) \norm \cdot(\devt^{n+1} - \dev^{n+1}) dx=
\\
 =   -\rho_{s}\Delta t \epsilon  \sum_{n=0}^{N-1} \int_{\g} d_t \thv^{n+1} \cdot  \defl^{n+1}|_{\g} dS
 +\Delta t \sum_{n=0}^{N-1} \left(   \int_{\g} \left( \rho_{s} \epsilon d_t \thv^{n+1}  -  \str(\thf^{n}, \thp^{n}) \norm\right)
 \cdot  (\dev^{n+1}-\devt^{n+1}) dS \right)
 \\
  \ltt 
   \Delta t \left( \frac{ \rho_s^2 \epsilon^2 }{\mu} +\frac{4}{\gamma_1} \right) \sum_{n=0}^{N-1} \n d_t \thv^{n+1} \n^2_{L^2(\g)}
    +\frac{\mu \Delta t}{2} \sum_{n=0}^{N-1} \n\defl^{n+1}\n^2_{F}
    +\frac{4\Delta t}{ \gamma_1}  \sum_{n=0}^{N-1} \|   \str(\thf^{n}, \thp^{n}) \norm  \|^2_{L^2(\g)}  
+\frac{\Delta t \gamma_1}{8}  \sum_{n=0}^{N-1} \|   \dev^{n+1}-\devt^{n+1}  \|^2_{L^2(\g)}.  
\end{gather*}

Combining the estimates above with equation~\eqref{error_eng}  and taking into account the assumption on the initial data, we have 
\begin{gather}
\mathcal{E}_f(\defl^N)+\mathcal{E}_v(\dev^N) +\mathcal{E}_s(\des^N)+   \frac{\Delta t^2}{2 \rho_s \epsilon}  \| \str(\defl^{N}, \dep^{N}) \norm \|^2_{L^2(\Gamma)}  +\frac{\rho_f \Delta t^2}{2}\sum_{n=0}^{N-1}\n  d_t \defl^{n+1} \n^2_{L^2(\domf)}
 +\frac{\rho_s \epsilon}{2}\sum_{n=0}^{N-1}\n \devt^{n+1}-\dev^{n} \n^2_{L^2(\g)}  
   \notag \\  
   + \mu \Delta t\sum_{n=0}^{N-1} \n \defl^{n+1} \n^2_{F}  
   +\frac{\Delta t^2}{2}\sum_{n=0}^{N-1}\n d_t\des^{n+1}\n^2_S
 \ltt  \Delta t \sum_{n=0}^{N-1} \big(\mathcal{R}^f(\defl^{n+1})+\mathcal{R}^s(\dev^{n+1})+\mathcal{R}^{os}(  \dev^{n+1} - \devt^{n+1})+\mathcal{R}^e(\des^{n+1})\big)
    \notag \\  
+ \frac{\Delta t}{\gamma} \sum_{n=0}^{N-1}\| \thv^{n+1} \|^2_S
+\frac{ \Delta t \rho_f^2}{\mu} \sum_{n=0}^{N-1} \| d_t  \thf^{n+1}\|^2_{L^2(\domf)}
+   \Delta t \left( \frac{ \rho_s^2 \epsilon^2 }{\mu} +\frac{4}{\gamma_1} \right) \sum_{n=0}^{N-1} \n d_t \thv^{n+1} \n^2_{L^2(\g)}
  + \Delta t \mu \sum_{n=0}^{N-1} \n  \thf^{n+1}\n^2_{F}
 \notag \\
      +\frac{4 \Delta t}{ \gamma_1}  \sum_{n=0}^{N-1} \|   \str(\thf^{n}, \thp^{n}) \norm  \|^2_{L^2(\g)}  
+   \left(\frac{\Delta t^2}{2 \rho_s \epsilon} + \frac{\Delta t}{2\gamma_1} +\frac{  \Delta t}{2 \gamma_2 \rho_s \epsilon} \right)\sum_{n=0}^{N-1}  \|\str\left(S_h (\velf^{n+1}-\velf^{n}), \Pi_h (p^{n+1}-p^n)  \right) \norm \|^2_{L^2(\Gamma)}
\notag \\
   +\frac{ \Delta t }{\mu}  \sum_{n=0}^{N-1} \n\thp^{n+1}\n^2_{L^2(\Omega)}
               +\frac{\gamma_1 \Delta t}{4} \sum_{n=0}^{N-1}  \|\devt^{n+1}-\dev^{n+1} \|^2_{L^2(\Gamma)}
               + \frac{\gamma_2 \Delta t^3}{2 \rho_s \epsilon} \sum_{n=0}^{N-1} \|\str(\defl^{n}, \dep^{n}) \norm \|^2_{L^2(\Gamma)}
+\frac{\Delta t \gamma}{4}\sum_{n=0}^{N-1} \| \des^{n+1}\|^2_S.  
  \end{gather}  
To estimate the approximation and consistency errors, we use Lemmas~\ref{lemma_interpolation} and ~\ref{cons1}, leading to the following inequality
\begin{gather}
\mathcal{E}_f(\defl^N)+\mathcal{E}_v(\dev^N) +\mathcal{E}_s(\des^N)+   \frac{\Delta t^2}{2 \rho_s \epsilon}  \| \str(\defl^{N}, \dep^{N}) \norm \|^2_{L^2(\Gamma)}  +\frac{\rho_f \Delta t^2}{2}\sum_{n=0}^{N-1}\n  d_t \defl^{n+1} \n^2_{L^2(\domf)}
 +\frac{\rho_s \epsilon}{2}\sum_{n=0}^{N-1}\n \devt^{n+1}-\dev^{n} \n^2_{L^2(\g)}  
   \notag \\  
   + \frac{\mu \Delta t}{2} \sum_{n=0}^{N-1} \n \defl^{n+1} \n^2_{F}  
      +\frac{\Delta t^2}{2}\sum_{n=0}^{N-1}\n d_t\des^{n+1}\n^2_S
 \ltt  
  \Delta t^2 \left( \n \dtt \velf \n^2_{L^2(0,T,L^2(\domf))} +\frac{1}{\gamma} \n \dtt \disv\n^2_{L^2(0,T;L^2(\Gamma))} +\frac{1}{\gamma} \n \dtt \displ\n^2_{L^2(0,T; H^1(\Gamma))} \right.
 \notag \\
 \left. +\frac{1}{\gamma_1}  \n \partial_t  \str \norm \n^2_{L^2(0,T; L^2(\Gamma))}\right)
+  \left(1+\frac{1}{\gamma_1} \right)  h^{2k+2} \n\partial_t \disv\n^2_{L^2(0,T;H^{k+1}(\Gamma))}
+   h^{2s+2} \left( \n p \n^2_{L^2(0,T; H^{s+1}(\domf))}+\frac{1}{ \gamma_1 } \n p \n^2_{L^2(0,T; H^{s+1}(\g))} \right)
\notag \\
 +  h^{2k} \left( \frac{1}{\gamma}  \| \disv \|^2_{L^2(0,T;H^{k+1}(\Gamma))}
+ \n\partial_t \velf \n^2_{L^2(0,T;H^{k+1}(\domf))}
+ \n \velf \n^2_{L^2(0,T;H^{k+1}(\domf))}
  +\frac{1}{ \gamma_1 }  \n\velf \n^2_{L^2(0,T;H^{k+1}(\g))}  \right)
\notag \\
+   \left(\frac{\Delta t^2}{2 \rho_s \epsilon} + \frac{\Delta t}{2\gamma_1} +\frac{  \Delta t}{2 \gamma_2 \rho_s \epsilon} \right)\sum_{n=0}^{N-1}  \|\str\left(S_h (\velf^{n+1}-\velf^{n}), \Pi_h (p^{n+1}-p^n) \norm \right) \|^2_{L^2(\Gamma)}
               + \frac{\gamma_2 \Delta t^3}{2 \rho_s \epsilon} \sum_{n=0}^{N-1} \|\str(\defl^{n}, \dep^{n}) \norm \|^2_{L^2(\Gamma)}
\notag \\
                +\frac{\gamma_1 \Delta t}{4} \sum_{n=0}^{N-1}  \|\devt^{n+1}-\dev^{n+1} \|^2_{L^2(\Gamma)}  
+\frac{\Delta t \gamma}{2}\sum_{n=0}^{N-1} \| \des^{n+1}\|^2_S
+ \gamma \Delta t \frac{\rho_s \epsilon}{2} \sum_{n=0}^{N-1} \n\dev^{n+1}\n^2_{L^2(\Gamma)}.
  \end{gather}
We estimate term $\frac{\gamma_1 \Delta t}{4} \sum_{n=0}^{N-1}  \|\devt^{n+1}-\dev^{n+1} \|^2_{L^2(\Gamma)}$ using equation~\eqref{trstres} as follows
\begin{gather}
\frac{\gamma_1 \Delta t}{4} \sum_{n=0}^{N-1}  \|\devt^{n+1}-\dev^{n+1} \|^2_{L^2(\Gamma)} 
\leq \frac{\gamma_1 \Delta t^3}{2 \rho_s^2 \epsilon^2} \sum_{n=0}^{N-1}  \|  \str\left(S_h (\velf^{n+1}-\velf^{n}), \Pi_h (p^{n+1}-p^n)\right) \norm \|^2_{L^2(\Gamma)} 
\notag \\
+\frac{\gamma_1 \Delta t^3}{2 \rho_s^2 \epsilon^2} \sum_{n=0}^{N-1}  \|\str( \defl^{n+1}- \defl^{n}, \dep^{n+1} - \dep^{n})\norm  \|^2_{L^2(\Gamma)} 
\leq \frac{\gamma_1 \Delta t^3}{2 \rho_s^2 \epsilon^2} \sum_{n=0}^{N-1}  \|  \str\left(S_h (\velf^{n+1}-\velf^{n}), \Pi_h (p^{n+1}-p^n)\right) \norm \|^2_{L^2(\Gamma)} 
\notag \\
+\frac{\gamma_1 \Delta t^3}{\rho_s^2 \epsilon^2}   \|\str( \defl^{0}, \dep^{0})\norm  \|^2_{L^2(\Gamma)} 
+\frac{2\gamma_1 \Delta t^3}{\rho_s^2 \epsilon^2} \sum_{n=0}^{N-1}  \|\str( \defl^{n}, \dep^{n} )\norm  \|^2_{L^2(\Gamma)} 
+\frac{\gamma_1 \Delta t^3}{\rho_s^2 \epsilon^2} \|\str( \defl^{N}, \dep^{N} )\norm  \|^2_{L^2(\Gamma)}. 
\end{gather}
Finally, adding and subtracting the continuous solution, and applying Lemmas~\ref{lemma_interpolation} and~\ref{consistency}, we have
\begin{gather}
\Delta t \sum_{n=0}^{N-1}  \|  \str\left(S_h (\velf^{n+1}-\velf^{n}), \Pi_h (p^{n+1}-p^n)\right) \norm \|^2_{L^2(\Gamma)} \leq 2\Delta t \sum_{n=0}^{N-1}  \| \Delta t \str\left(d_t \thf^{n+1}, d_t \thp^{n+1} \right) \norm \|^2_{L^2(\Gamma)} 
\notag \\
+2\Delta t \sum_{n=0}^{N-1}  \|  \str\left(\velf^{n+1}-\velf^{n}, p^{n+1}-p^n \right) \norm \|^2_{L^2(\Gamma)}
\ltt \Delta t^2 h^{2k} \n\partial_t \velf \n^2_{L^2(0,T;H^{k+1}(\g))}+ \Delta t^2 h^{2s+2} \n\partial_t p \n^2_{L^2(0,T;H^{s+1}(\g))}
\notag \\
+\Delta t^2  \n \partial_t  \str \norm \n^2_{L^2(0,T; L^2(\g))}.
\end{gather}  
Assuming that $\gamma \Delta t <1, \gamma_1 < \displaystyle\frac{\rho_s \epsilon}{8 \Delta t}, \gamma_2<\frac{1}{4}$, and applying  the discrete Gronwall inequality~\cite{thomee2006galerkin}, we get
\begin{gather}
\mathcal{E}_f(\defl^N)+\mathcal{E}_v(\dev^N) +\mathcal{E}_s(\des^N)+   \frac{3\Delta t^2}{8 \rho_s \epsilon}  \| \str(\defl^{N}, \dep^{N}) \norm \|^2_{L^2(\Gamma)}  +\frac{\rho_f \Delta t^2}{2}\sum_{n=0}^{N-1}\n  d_t \defl^{n+1} \n^2_{L^2(\domf)}
 +\frac{\rho_s \epsilon}{2}\sum_{n=0}^{N-1}\n \devt^{n+1}-\dev^{n} \n^2_{L^2(\g)}  
   \notag \\  
   + \frac{\mu \Delta t}{2} \sum_{n=0}^{N-1} \n \defl^{n+1} \n^2_{F}  
      +\frac{\Delta t^2}{2}\sum_{n=0}^{N-1}\n d_t\des^{n+1}\n^2_S
 \ltt   e^{\tilde{\gamma} T}\bigg\{
  \Delta t^2 \left( \n \dtt \velf \n^2_{L^2(0,T,L^2(\domf))} +\frac{1}{\gamma} \n \dtt \disv\n^2_{L^2(0,T;L^2(\Gamma))}  \right.
 \notag \\
 \left. +\frac{1}{\gamma} \n \dtt \displ\n^2_{L^2(0,T; H^1(\Gamma))}+\frac{1}{\gamma_1}  \n \partial_t  \str \norm \n^2_{L^2(0,T; L^2(\Gamma))}\right)
 +   \Delta t^4 \left(\Delta t+\frac{ 1}{ \gamma_2 } + \frac{1}{\gamma_1} +\gamma_1 \Delta t^2 \right)
\n \partial_t  \str \norm \n^2_{L^2(0,T; L^2(\g))}
\notag \\
+  \left(1+\frac{1}{\gamma_1} \right)  h^{2k+2} \n\partial_t \disv\n^2_{L^2(0,T;H^{k+1}(\Gamma))}
+   h^{2s+2} \left( \n p \n^2_{l^2(0,T; H^{s+1}(\domf))}+\frac{1}{ \gamma_1 } \n p \n^2_{L^2(0,T; H^{s+1}(\g))} \right)
\notag \\
 +  h^{2k} \left( \frac{1}{\gamma}  \| \disv \|^2_{L^2(0,T;H^{k+1}(\Gamma))}
+ \n\partial_t \velf \n^2_{L^2(0,T;H^{k+1}(\domf))}
+ \n \velf \n^2_{L^2(0,T;H^{k+1}(\domf))}
  +\frac{1}{ \gamma_1 }  \n\velf \n^2_{L^2(0,T;H^{k+1}(\g))}  \right)
\notag \\
+   \Delta t^2 \left(\Delta t+\frac{ 1}{ \gamma_2 } + \frac{1}{\gamma_1} +\gamma_1 \Delta t^2 \right)
\left( h^{2k} \n\partial_t \velf \n^2_{L^2(0,T;H^{k+1}(\g))}+  h^{2s+2} \n\partial_t p \n^2_{L^2(0,T;H^{s+1}(\g))} \right) 
\bigg\}.
  \end{gather}
Recall that the error between the exact and the discrete solution is the sum of the approximation error and the truncation error~\eqref{error1}-\eqref{error2}. Thus, using the triangle inequality and approximation properties~\eqref{app1}-\eqref{app2}, we prove the desired estimate.

\end{proof}

\begin{lemma} \label{cons1}
The following estimate holds for $\gamma>0$:
\begin{gather*}
\Delta t \sum_{n=0}^{N-1} \big(\mathcal{R}^f(\defl^{n+1})+\mathcal{R}^s(\dev^{n+1})+\mathcal{R}^{os}( \dev^{n+1}- \devt^{n+1})+\mathcal{R}^e(\des^{n+1})\big)  \\
\ltt \Delta t^2 \left( \n \dtt \velf \n^2_{L^2(0,T,L^2(\domf))} +\frac{1}{\gamma} \n \dtt \disv\n^2_{L^2(0,T;L^2(\Gamma))} +\frac{1}{\gamma} \n \dtt \displ\n^2_{L^2(0,T; H^1(\Gamma))} +\frac{1}{\gamma_1}  \n \partial_t  \str \norm \n^2_{L^2(0,T; L^2(\Gamma))} \right)\\
+ \frac{\mu \Delta t}{2} \sum_{n=0}^{N-1} \n\defl^{n+1}\n^2_{F}+ \frac{\Delta t \gamma_1}{8} \sum_{n=0}^{N-1} \n \dev^{n+1}-\devt^{n+1} \n^2_{L^2(\Gamma)}
+ \gamma \Delta t \frac{\rho_s \epsilon}{2} \sum_{n=0}^{N-1} \n\dev^{n+1}\n^2_{L^2(\Gamma)}+\frac{\gamma \Delta t}{4} \sum_{n=0}^{N-1} \n\des^{n+1}\n^2_S,
\end{gather*}
\end{lemma}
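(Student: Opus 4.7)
The plan is to estimate each of the four residuals separately by combining Cauchy--Schwarz with Young's inequality, and then to control the resulting residual factors via the standard Taylor-expansion bound
\[
\n d_t \tf^{n+1}-\dt \tf^{n+1}\n_X^2 \;\ltt\; \Delta t \int_{t^n}^{t^{n+1}}\n\dtt \tf(s)\n_X^2\,ds,
\]
valid for any smooth $\tf$ taking values in a Hilbert space $X$. After multiplying by $\Delta t$ and summing from $n=0$ to $N-1$, each such residual will produce the desired factor $\Delta t^2\n\dtt\tf\n^2_{L^2(0,T;X)}$. The analogous stress-difference bound $\n\str(\velf^{n+1},p^{n+1})\norm-\str(\velf^n,p^n)\norm\n^2_{L^2(\g)}\ltt \Delta t \int_{t^n}^{t^{n+1}}\n\dt\str\norm\n^2_{L^2(\g)}\,ds$ will be used to handle the splitting residual.

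For the fluid inertia term $\mathcal{R}^f(\defl^{n+1})$, I apply Cauchy--Schwarz in $L^2(\domf)$, then Poincar\'e--Friedrichs together with Korn's inequality to replace $\n\defl^{n+1}\n_{L^2(\domf)}$ by $\n\defl^{n+1}\n_F$ up to a constant, and finally Young's inequality tuned so that exactly $\frac{\mu}{2}\n\defl^{n+1}\n_F^2$ is left on the right; summation and the Taylor bound then produce $\Delta t^2\n\dtt\velf\n^2_{L^2(0,T;L^2(\domf))}$. For the structure inertia residual $\mathcal{R}^s(\dev^{n+1})$, Cauchy--Schwarz on $\g$ followed by Young's inequality with a $\gamma$-dependent weight yields the absorbed term $\frac{\gamma\rho_s\epsilon}{2}\n\dev^{n+1}\n^2_{L^2(\g)}$ and the residual factor $\frac{1}{\gamma}\Delta t^2 \n\dtt\disv\n^2_{L^2(0,T;L^2(\g))}$. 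The elastic consistency term $\mathcal{R}^e(\des^{n+1})=a_s(\des^{n+1},\dt\displ^{n+1}-d_t\displ^{n+1})$ is treated analogously using the $\n\cdot\n_S$ norm (absorbing $\frac{\gamma}{4}\n\des^{n+1}\n_S^2$), and the equivalence $\n\cdot\n_S\sim\n\cdot\n_{H^1(\g)}$ converts the residual factor into $\frac{1}{\gamma}\Delta t^2\n\dtt\displ\n^2_{L^2(0,T;H^1(\g))}$.

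The splitting residual is handled similarly but with a time-difference of the stress in place of a time derivative. Using the simplified identity $\mathcal{R}^{os}(\dev^{n+1}-\devt^{n+1})=\int_\g(\str(\velf^{n+1},p^{n+1})\norm-\str(\velf^n,p^n)\norm)\cdot(\dev^{n+1}-\devt^{n+1})\,dS$ derived just before the lemma (which uses $\disvt^{n+1}=\disv^{n+1}$ at the continuous level), Cauchy--Schwarz on $\g$ and a Young's inequality calibrated to absorb exactly $\frac{\gamma_1\Delta t}{8}\n\dev^{n+1}-\devt^{n+1}\n^2_{L^2(\g)}$ leave a factor of $\frac{2}{\gamma_1}\n\str^{n+1}\norm-\str^n\norm\n^2_{L^2(\g)}$; the stress-difference Taylor estimate then yields, after summation, the claimed contribution $\frac{1}{\gamma_1}\Delta t^2\n\dt\str\norm\n^2_{L^2(0,T;L^2(\g))}$.

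The technical core is essentially routine — Cauchy--Schwarz, Young's, and Taylor — so the main subtlety, and the only real obstacle, is the careful bookkeeping of the four Young's-inequality weights so that the absorbed constants are exactly $\mu/2$, $\gamma\rho_s\epsilon/2$, $\gamma/4$, and $\gamma_1/8$. These precise coefficients are dictated by what the Gronwall absorption step in the proof of Theorem~\ref{MainThm} can tolerate without destroying the remaining left-hand-side terms (in particular $\frac{\mu\Delta t}{2}\sum\n\defl^{n+1}\n^2_F$ and the strict inequalities $\gamma\Delta t<1$, $\gamma_2<\tfrac14$); once the weights are fixed accordingly, summing the four separate bounds yields the stated inequality.
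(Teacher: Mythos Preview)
Your proposal is correct and follows essentially the same route as the paper's proof: each of the four residuals is bounded separately by Cauchy--Schwarz and a weighted Young's inequality (with Poincar\'e--Friedrichs and Korn for $\mathcal{R}^f$, and the $\|\cdot\|_S\sim\|\cdot\|_{H^1(\g)}$ equivalence for $\mathcal{R}^e$), after which the time-difference factors $\|d_t\tf^{n+1}-\dt\tf^{n+1}\|^2$ and $\|\str^{n+1}\norm-\str^n\norm\|^2$ are converted to $\Delta t^2$ bounds via the Taylor/consistency estimates of Lemma~\ref{consistency}. The specific Young's weights you identify ($\mu/2$, $\gamma\rho_s\epsilon/2$, $\gamma/4$, $\gamma_1/8$) match the paper's choices exactly.
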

\begin{proof}
Using Cauchy-Schwartz, Young's, Poincar\'e - Friedrichs, and Korn's inequalities, we have
$$ \Delta t \displaystyle\sum_{n=0}^{N-1}\mathcal{R}^{f}(\defl^{n+1}) \ltt \frac{\Delta t \rho_f^2 }{ \mu} \sum_{n=0}^{N-1}\n d_t \velf^{n+1}-\partial_t \velf^{n+1}\n^2_{L^2(\domf)} + \frac{\mu \Delta t}{2} \sum_{n=0}^{N-1} \n\defl^{n+1}\n^2_{F}.$$
Furthermore, using Cauchy-Schwartz and Young's inequalities, for $\gamma>0$, we have
\begin{gather*}
\Delta t \displaystyle\sum_{n=0}^{N-1}\left(\mathcal{R}^s(\dev^{n+1})+\mathcal{R}^e(\des^{n+1}) \right)\le \frac{\Delta t \rho_s \epsilon}{2 \gamma}\sum_{n=0}^{N-1}\n d_t \disv^{n+1}-\partial_t \disv^{n+1}\n^2_{L^2(\g)} + \gamma \Delta t \frac{\rho_s \epsilon}{2}\sum_{n=0}^{N-1} \n\dev^{n+1}\n^2_{L^2(\g)}
\\
+\frac{\Delta t}{\gamma} \sum_{n=0}^{N-1} \n d_t \displ^{n+1}-\partial_t \displ^{n+1}\n^2_S +\frac{\gamma \Delta t}{4} \sum_{n=0}^{N-1} \n \des^{n+1}\n^2_S.
\end{gather*}
Finally, to estimate the operator splitting error, we use the Cauchy-Schwartz and Young's inequality with $\gamma_1>0$ as follows
\begin{gather*}
\Delta t \displaystyle\sum_{n=0}^{N-1} \mathcal{R}^{os}(\dev^{n+1} - \devt^{n+1} )=\Delta t \displaystyle\sum_{n=0}^{N-1} \int_{\g} \left( \str(\velf^{n+1},p^{n+1}) \norm-\str(\velf^n,p^n) \norm  \right) \cdot  (\devt^{n+1} - \dev^{n+1}) dx
\notag \\
\leq \frac{2\Delta t}{\gamma_1} \displaystyle\sum_{n=0}^{N-1} \n \str(\velf^{n+1},p^{n+1}) \norm-\str(\velf^n,p^n) \norm  \n^2_{L^2(\g)} + \frac{\Delta t \gamma_1}{8} \sum_{n=0}^{N-1} \n \dev^{n+1}-\devt^{n+1} \n^2_{L^2(\Gamma)}.
\end{gather*}
The final estimate follows by applying Lemma~\ref{consistency}.
\end{proof}

\begin{lemma}[Consistency errors] \label{consistency}
The following inequalities hold:
\begin{align*}
&\Delta t \sum_{n=0}^{N-1}\n d_t \tf^{n+1}-\partial_t \tf^{n+1}\n^2_{L^2(S)} \ltt \Delta t^2 \n\partial_{tt} \tf\n^2_{L^2(0,T;L^2(S))}, \\
&\Delta t \displaystyle\sum_{n=0}^{N-1}\n \str^{n+1} \norm-\str^{n} \norm  \n^2_{L^2(\Gamma)}\ltt \Delta t^2  \n \partial_t  \str \norm \n^2_{L^2(0,T; L^2(\Gamma))}.
\end{align*}
\end{lemma}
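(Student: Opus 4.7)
The plan is to derive both inequalities from the fundamental theorem of calculus combined with Cauchy-Schwarz in time; these are entirely standard bounds on the consistency error of the backward Euler quotient, and the regularity hypotheses \eqref{reg1}--\eqref{reg2} are tailored precisely to accommodate them.

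For the first inequality, I would rewrite the discrete time derivative as a mean value, $d_t \tf^{n+1} = \frac{1}{\Delta t}\int_{t^n}^{t^{n+1}} \dt \tf(s)\,ds$, and use the identity $\dt \tf(t^{n+1}) = \frac{1}{\Delta t}\int_{t^n}^{t^{n+1}} \dt \tf(t^{n+1})\,ds$ to obtain
\begin{equation*}
d_t \tf^{n+1} - \dt \tf^{n+1} = -\frac{1}{\Delta t}\int_{t^n}^{t^{n+1}}\int_s^{t^{n+1}} \dtt \tf(\tau)\,d\tau\,ds.
\end{equation*}
Applying Cauchy-Schwarz in time twice (once to the inner $\tau$-integral, producing a factor $(t^{n+1}-s) \le \Delta t$, and once to the outer $s$-integral, producing another factor $\Delta t$) yields the local estimate $\n d_t \tf^{n+1} - \dt \tf^{n+1}\n^2_{L^2(S)} \ltt \Delta t \int_{t^n}^{t^{n+1}} \n \dtt \tf(\tau)\n^2_{L^2(S)}\,d\tau$. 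Summing over $n=0,\dots,N-1$, multiplying by $\Delta t$, and collapsing the local intervals into $(0,T)$ gives the claimed $\Delta t^2 \n \dtt \tf\n^2_{L^2(0,T;L^2(S))}$ bound.

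For the second inequality, a single application of the fundamental theorem of calculus gives $\str^{n+1}\norm - \str^{n}\norm = \int_{t^n}^{t^{n+1}} \dt(\str \norm)(s)\,ds$, and one pass of Cauchy-Schwarz in time produces $\n \str^{n+1}\norm - \str^{n}\norm \n^2_{L^2(\Gamma)} \le \Delta t \int_{t^n}^{t^{n+1}} \n \dt(\str\norm)(s)\n^2_{L^2(\Gamma)}\,ds$. Summing and multiplying by $\Delta t$ immediately yields the desired bound.

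I do not anticipate any serious obstacle here: the regularity assumptions already guarantee that $\dtt \velf, \dtt \disv, \dtt \displ$ and $\dt(\str \norm)$ lie in the appropriate $L^2$ spaces, so the integrals on the right-hand sides are finite. The only care required is in tracking the $\Delta t$ factors introduced by the nested Cauchy-Schwarz applications so as to recover the correct $\Delta t^2$ scaling; beyond that, no tools outside standard finite-difference error analysis are needed.
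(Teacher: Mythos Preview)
Your proposal is correct and follows essentially the same approach as the paper: the paper writes the first remainder as $\frac{1}{\Delta t}\int_{t^n}^{t^{n+1}}(t-t^n)\,\partial_{tt}\tf(t)\,dt$ (the Fubini collapse of your double integral) and then applies Cauchy--Schwarz once, while for the second inequality it uses exactly your fundamental-theorem-of-calculus plus Cauchy--Schwarz argument. The only difference is cosmetic (single weighted integral versus nested double integral), and the $\Delta t$ bookkeeping is identical.
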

\begin{proof}
Using the Cauchy-Schwartz inequality, we have
\begin{gather}
\Delta t \sum_{n=0}^{N-1}\n d_t \tf^{n+1}-\partial_t \tf^{n+1}\n^2_{L^2(S)} = \Delta t  \sum_{n=0}^{N-1}  \int_{S} \bigg|\frac{1}{\Delta t} \int_{t^n}^{t^{n+1}} (t-t^n)\partial_{tt} \tf (t) dt \bigg|^2 dx  \notag
\\
 \le \frac{1}{\Delta t} \int_{S} \sum_{n=0}^{N-1} \bigg( \int_{t^n}^{t^{n+1}} |t-t^n|^2 dt \int_{t^n}^{t^{n+1}} |\partial_{tt}  \tf|^2 dt \bigg) dx \le \frac{\Delta t^2}{3} \int_{S} \int_0^T |\partial_{tt} \tf|^2 dt dx
  \ltt \Delta t^2  \n \dtt \tf\n^2_{L^2(0,T; L^2(S))}. \label{constt1}
\end{gather}
To prove the second inequality, we use the Taylor expansion with integral reminder
$$ \str^{n+1} \norm-\str^{n} \norm  = \int_{t^n}^{t^{n+1}} \partial_t  \str \norm dt.$$
Now we have
\begin{gather*}
\Delta t \displaystyle\sum_{n=0}^{N-1}\n \str^{n+1} \norm-\str^{n} \norm  \n^2_{L^2(\Gamma)} =  \Delta t  \sum_{n=0}^{N-1}  \int_{0}^L \bigg|  \int_{t^n}^{t^{n+1}} \partial_t  \str \norm dt \bigg|^2 dx  \notag \\
 \le \Delta t \int_0^L \sum_{n=0}^{N-1} \bigg( \int_{t^n}^{t^{n+1}}  dt \int_{t^n}^{t^{n+1}} |\partial_t  \str \norm|^2 dt \bigg) dx \le \Delta t^2 \int_{0}^L \int_0^T |\partial_t  \str \norm|^2 dt dx \ltt \Delta t^2  \n \partial_t  \str \norm \n^2_{L^2(0,T; L^2(\Gamma))}.
\end{gather*}
The last line in the proof follows from~\eqref{constt1} and the definition of the time discrete norms~\eqref{tdiscnorm}.
\end{proof}

\begin{lemma}[Interpolation errors] \label{lemma_interpolation} The following inequalities hold:
\begin{gather*}
\Delta t \sum_{n=0}^{N-1} \n d_t \thf^{n+1}\n^2_{L^2(\domf)} \le  \n \dt \thf\n^2_{L^2(0,T;L^2(\domf))} \ltt h^{2k} \n\partial_t \velf
\n^2_{L^2(0,T;H^{k+1}(\domf))}, \\
\Delta t \sum_{n=0}^{N-1} \n d_t \thv^{n+1}\n^2_{L^2(\Gamma)} \le  \n \dt \thv\n^2_{L^2(0,T;L^2(\Gamma))} \ltt  h^{2k+2} \n\partial_t \disv\n^2_{L^2(0,T;H^{k+1}(\Gamma))}, \\
\Delta t \sum_{n=0}^{N-1} \n\thf^{n+1}\n^2_{F} \ltt \Delta t \sum_{n=0}^{N-1} h^{2k} \n \velf^{n+1}\n^2_{H^{k+1}(\domf)}  \ltt  h^{2k} \n \velf\n^2_{L^2(0,T;H^{k+1}(\domf))}, \\
\Delta t \sum_{n=0}^{N-1} \n \thv^{n+1} \n^2_S \ltt 
 h^{2k} \n\disv\n^2_{L^2(0,T;H^{k+1}(\Gamma))}, \qquad
\Delta t \sum_{n=0}^{N-1} \n\thp^{n+1}\n^2_{L^2(\domf)} \ltt h^{2s+2} \n p \n^2_{L^2(0,T; H^{s+1}(0,T))}. 
\end{gather*}
\end{lemma}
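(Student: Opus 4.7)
The plan is to prove each of the five inequalities separately. For the two inequalities that involve the discrete time derivative $d_t$, the strategy is a two-step reduction: first convert the discrete-in-time sum on the left into a continuous-in-time integral (this is the first "$\le$" in each chain), and then apply the spatial approximation property in a pointwise-in-$t$ fashion. For the three inequalities that do not involve $d_t$, only the pointwise-in-$t$ spatial approximation is needed, followed by summation.

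For the time-derivative step, I would first observe that, by linearity, the projection commutes with $\partial_t$, so $\partial_t\thf = \partial_t\velf - S_h(\partial_t\velf)$ and likewise $\partial_t\thv = \partial_t\disv - I_h(\partial_t\disv)$. Next, I would write
\begin{equation*}
d_t \thf^{n+1} = \frac{1}{\Delta t}\int_{t^n}^{t^{n+1}} \partial_t\thf(s)\,ds,
\end{equation*}
and apply the Cauchy--Schwarz inequality inside the $L^2(\domf)$ norm to get
\begin{equation*}
\n d_t\thf^{n+1}\n_{L^2(\domf)}^2 \le \frac{1}{\Delta t}\int_{t^n}^{t^{n+1}} \n\partial_t\thf(s)\n_{L^2(\domf)}^2\,ds.
\end{equation*}
Multiplying by $\Delta t$ and summing over $n=0,\dots,N-1$ telescopes the integrals and yields the first "$\le$" bound by $\n\partial_t\thf\n_{L^2(0,T;L^2(\domf))}^2$. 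The second inequality in the lemma is identical in structure on $\Gamma$ using $I_h$.

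To finish the first two inequalities, I would invoke the spatial approximation properties. For the Stokes-like projection, the bound stated just below \eqref{app2} (together with the continuous embedding $\|\cdot\|_{L^2(\domf)}\ltt \|\cdot\|_F$ via Korn--Poincar\'e) yields $\n\velf-S_h\velf\n_{L^2(\domf)}\ltt h^k\n\velf\n_{H^{k+1}(\domf)}$; applied to $\partial_t\velf$ pointwise in time and integrated over $(0,T)$ this gives the $h^{2k}$ bound. For the second inequality, the Lagrange-interpolation estimate \eqref{app1} applied to $\partial_t\disv$ gives the $h^{2k+2}$ bound directly.

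For the remaining three inequalities, I would proceed by direct pointwise-in-$t$ application of the approximation estimates. The third follows from \eqref{app2} applied to $\velf^{n+1}$, followed by the Riemann-sum identification $\Delta t\sum_n\n\velf^{n+1}\n_{H^{k+1}(\domf)}^2 \ltt \n\velf\n_{L^2(0,T;H^{k+1}(\domf))}^2$ (using piecewise-constant-in-time interpretation as remarked below \eqref{tdiscnorm}). The fourth uses the equivalence $\|\cdot\|_S\sim\|\cdot\|_{H^1(\g)}$ stated below \eqref{strucinit} together with the $H^1$-part of \eqref{app1} to get $\n\thv^{n+1}\n_S\ltt h^k\n\disv^{n+1}\n_{H^{k+1}(\g)}$. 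The fifth is immediate from the definition of the pressure projection $\Pi_h$ stated just above~\eqref{press_proj}. In all three, the final step is the summation-to-integral reduction.

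None of the steps present a real obstacle; the only point requiring a little care is the commutation of $\partial_t$ with the spatial projections $S_h$, $I_h$, $R_h$, $\Pi_h$, which holds because these operators are linear and act only in the spatial variable, so differentiation in $t$ passes through them in the sense of Bochner integrals under the assumed regularity \eqref{reg1}--\eqref{reg2}.
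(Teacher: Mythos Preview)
Your proposal is correct and follows essentially the same route as the paper: write $d_t\thf^{n+1}=\frac{1}{\Delta t}\int_{t^n}^{t^{n+1}}\partial_t\thf\,ds$, apply Cauchy--Schwarz to pass to $\|\partial_t\thf\|_{L^2(0,T;L^2(\domf))}^2$, and finish with the pointwise-in-$t$ approximation properties \eqref{app1}--\eqref{app2}; the remaining three inequalities are direct applications of those same properties. Your extra remarks (commutation of $\partial_t$ with the spatial projectors, and the use of Korn--Poincar\'e to pass from the $F$-norm estimate \eqref{app2} to an $L^2(\domf)$ bound for $S_h$) make explicit two points the paper leaves implicit, but do not change the argument.
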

\begin{proof}
The last three inequalities follow directly from approximation properties~\eqref{app1}-\eqref{app2}. To prove the first equality, we use the Cauchy-Schwarz inequality as follows
  \begin{gather}
 \Delta t \sum_{n=0}^{N-1} \n d_t \thf^{n+1}\n^2_{L^2(\domf)} 
= \frac{1}{\Delta t} \sum_{n=0}^{N-1} \bigg\n \int_{t^n}^{t^{n+1}} \dt \thf^{n+1}\bigg\n^2_{L^2(\domf)} \le  \frac{1}{\Delta t} \sum_{n=0}^{N-1} \int_{\domf} \left(\Delta t \int_{t^n}^{t^{n+1}} |\dt \thf^{n+1}|^2 dt \right) \dx \notag \\
\le \n \dt \thf\n^2_{L^2(0,T;L^2(\domf))} \ltt h^{2k} \n\partial_t \velf \n^2_{L^2(0,T;H^{k+1}(\domf))}.
 \end{gather} 
 The second inequality is proved in an analogous way.
\end{proof}
\if 1=0
\begin{lemma}[Discrete Gronwall's inequality~\cite{thomee2006galerkin}]~\label{gronwall}
Let $(k_n)_{n \ge 0}, (p_n)_{n \ge 0}$ be two non-negative sequences, and $g_0 \ge 0$. Assume that the sequence $(\phi_n)_{n \ge 0}$ satisfies
\begin{align}
& \phi_0 \le g_0, \qquad \phi_n \le g_0+\sum_{s=0}^{n-1} p_s + \sum_{s=0}^{n-1} k_s \phi_s, \quad \forall n \ge 1.
\end{align} 
Then,
\begin{equation}
\phi_n \le \left( g_0 + \sum_{s=0}^{n-1} p_s\right) \exp \left(\sum_{s=0}^{n-1}k_s \right), \quad \forall n \ge 1.
\end{equation}
\end{lemma}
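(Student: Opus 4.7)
The plan is to prove this by a classical induction/comparison argument. The key trick is to introduce the majorant sequence
$\Psi_n := g_0 + \sum_{s=0}^{n-1} p_s + \sum_{s=0}^{n-1} k_s \phi_s$,
with $\Psi_0 := g_0$. The hypothesis of the lemma is exactly $\phi_n \le \Psi_n$ for all $n \ge 0$, and the advantage is that $\Psi_n$ satisfies a clean one-step scalar recurrence even though $\phi_n$ a priori does not.

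First I would verify the base case $n=0$, which is immediate since $\phi_0 \le g_0 = (g_0 + 0)\exp(0)$. For the inductive step, I would compute the forward difference $\Psi_{n+1}-\Psi_n = p_n + k_n \phi_n$ and use $\phi_n \le \Psi_n$ together with $k_n \ge 0$ to obtain the one-step recurrence $\Psi_{n+1} \le (1+k_n)\Psi_n + p_n$. Iterating this starting from $\Psi_0 = g_0$ yields
$\Psi_n \le g_0 \prod_{s=0}^{n-1}(1+k_s) + \sum_{s=0}^{n-1} p_s \prod_{j=s+1}^{n-1}(1+k_j)$.
Since $k_j \ge 0$, each partial product is bounded by the full product $\prod_{j=0}^{n-1}(1+k_j)$, which I would factor out to get $\Psi_n \le \bigl(g_0 + \sum_{s=0}^{n-1} p_s\bigr)\prod_{s=0}^{n-1}(1+k_s)$. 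Finally, the elementary scalar inequality $1+x \le e^x$ valid for $x\ge 0$ converts the product into $\exp\bigl(\sum_{s=0}^{n-1} k_s\bigr)$, and combining with $\phi_n \le \Psi_n$ gives the claimed bound.

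I do not anticipate any substantive obstacle here; this is a standard textbook result (cited as \cite{thomee2006galerkin}). The only points requiring mild care are (i) invoking nonnegativity of $k_n$ to pass from $\phi_n \le \Psi_n$ to $k_n \phi_n \le k_n \Psi_n$, and (ii) invoking nonnegativity of the $p_s$ and the $k_j$ when replacing partial products by the full product. Both are guaranteed by the nonnegativity assumptions on the sequences $(k_n)$ and $(p_n)$ in the lemma's hypothesis, so the whole argument reduces to bookkeeping of the nested products.
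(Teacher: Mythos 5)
Your proof is correct and complete. Note that the paper itself gives no proof of this lemma: it is stated only as a citation to the reference (Thom\'ee's book), and in the source the lemma is even wrapped in a disabled conditional block, so there is nothing in the paper to compare against. Your argument --- introducing the majorant $\Psi_n$, deriving the one-step recurrence $\Psi_{n+1}\le(1+k_n)\Psi_n+p_n$, bounding the partial products by the full product, and finishing with $1+x\le e^x$ --- is the standard textbook proof, and the two points of care you flag (nonnegativity of $k_n$ when multiplying the inequality $\phi_n\le\Psi_n$, and nonnegativity of $p_s$ and $k_j$ when enlarging the partial products) are exactly the places where the hypotheses are used. No gaps.
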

\fi

\section{Thick structure models and other extensions}\label{Sec:thick}
One of the most appealing features of the kinematically coupled $\beta$-scheme and its variants is that it can be generalized to the various FSI problems including the ones with  thick structures \cite{thick} and composite structures \cite{multilayered}. The stability and the convergence proof presented in this paper can  be applied, with simple and straightforward modifications, to the $\beta$-scheme for the fluid-composite structure interaction problem~\cite{multilayered}, where the composite structure consists of a thin layer and a thick layer. The main reason for that is the fact that  the fluid and  thick structure are coupled via the thin elastic interface which regularizes the problem (this regularization is quantified in $1D$ case in~\cite{BorisSimplifiedFSI}). It was proven in~\cite{SunBorMulti} that classical kinematically coupled scheme (case $\beta=0$) applied to fluid-composite structure interaction problem is convergent, but the order of convergence is $\mathcal{O}(\Delta t^{1/2})$ in time. Using the methods presented in this paper, one can show that the proposed  modified $\beta$ scheme  applied to a fluid-composite structure interaction problem with $\beta=1$ has optimal, first order in time, convergence.

We will briefly discuss the case of fluid-thick structure interaction problem which is more difficult (numerically and analytically) because there is no additional regularization due to the elastic interface. Thus, only a limited amount of numerical analysis of  partitioned schemes for FSI problems with thick structures is available in the literature.
The generalized Robin-Neumann explicit coupling scheme for the fluid-thick structure interaction problem was analyzed in \cite{fernandez2015convergence} where it was proved that it is convergent, with the order of convergence of $\mathcal{O}(\frac{\Delta t}{\sqrt{h}})$. 
 We consider the $\beta$-scheme for the fluid-thick structure interaction problem presented in~\cite{thick}. A basic stability estimate for the case $\beta=0$ is derived in~\cite{thick} where convergence of the $\beta$-scheme was proved numerically. Here we consider the case when there is no additional structural viscosity (in notation of \cite{thick} case $\epsilon=0$), which analytically and numerically is the most difficult case.  We will show that $\beta$-scheme for FSI with thick structures is  stable under condition $\Delta t^2\ltt h$. The obtained estimates could then be used to prove that the scheme is also convergent with order of accuracy $\mathcal{O}(\frac{\Delta t}{\sqrt{h}})$. Our proof illustrates that numerically our interface has a mass, which makes the scheme convergent.

In the following we  consider a simplified linear version of the fluid-thick structure interaction problem presented in~\cite{thick}. We  start with the weak formulation for the coupled problem and sub-problems of the partitioned scheme. Differential formulation and more details can be found in \cite{thick}. Furthermore, we  ignore the influence of the boundary conditions since they can be treated in the same manner as in the thin structure case.

Let $\Omega_F=(0,1)^2\times (-1,0)$, $\Omega_S=(0,1)^2\times (0,1)$ and $\Gamma=(0,1)^2\times\{0\}$ be the fluid domain, the structure domain and the fluid-structure interface, respectively. We define the corresponding function spaces:
$$
V^f=H^1(\Omega_F)^3,\; Q=L^2(\Omega_F), \;V^s=H^1(\Omega_S)^3, \;
V^{fsi}=\{(\tf, \ts) \in V^f \times V^s | \ \tf|_{\g}  = \ts|_{\g}\}.
$$
Furthermore we introduce a bilinear form connected to the linearized elastic operator:
$$
a_{ts}(\displ,\ts)=\int_{\Omega_S}{\bf S}(\displ):\nabla\ts,
$$
where ${\bf S}(\displ)=2\mu_s {\bf D}(\displ)+\lambda_s(\nabla\cdot\displ){\bf I}$ is the first Piola-Kirchhoff stress tensor and $\mu_s$ and $\lambda_s$ are the Lam\' e parameters. The variational formulation for the coupled fluid-thick structure interaction problem now reads:
\\
Given $t \in (0, T)$ find $(\velf,\disv)\in V^{fsi}$, $p\in Q$, $\displ\in V^s$ such that $\partial_t\displ=\disv$ on $\g$ and for every $(\tf,\ts,q)\in V^{fsi}\times Q$ the following holds:
\begin{gather}
\rho_f \int_{\Omega_F} \dt \velf \cdot \tf \dx +a_f(\velf,\tf)-b(p, \tf)+b(\tp, \velf) +  \rho_{s} \int_{\Omega_S} \dt \disv\cdot \ts \dx 
+a_{ts}(\displ, \ts) =  \int_{\Sigma} p_{in/out}(t) \tf\cdot \norm dS. \label{weakmonolithicthick}
\end{gather}

To discretize the problem in space we use the FEM triangulation with maximum triangle diameter $h$ and define the finite element spaces $V^f_h\subset V^f$, $V^s_h\subset V^s$, and $Q_h\subset Q$.
We denote by $\Omega_F^h$, $\Omega_S^h$, the strip in the fluid and the structure domain, respectively, that consists of all the elements that have at least one vertex on the interface. The width of $\Omega_F^h$ and $\Omega_S^h$ is of order $h$. 

The partitioned numerical scheme for the interaction between a fluid and a thick structure presented in~\cite{thick}, based on the kinematically coupled scheme, reads as follows

\noindent
\textbf{Step 1.} Find $(\velfth^{n+1},\disvth^{n+1})\in V^{fsi}_h$, $\displh^{n+1}\in V^s_h$ such that for every $(\tfh,\tsh)\in V^{fsi}_h$ the following equality holds: 
\begin{equation}\label{A1BisThick}
\begin{array}{c}
\displaystyle{\rho_s\int_{\Omega_S}\frac{\disvth^{n+1}-\disvh^{n}}{\Delta t}\cdot\tsh+a_{ts}(\displh^{n+1},\tsh)+\rho_f\int_{\Omega_F}\frac{\velfth^{n+1}-\velfh^n}{\Delta t}\cdot\tfh=-\int_{\Gamma}\str^n_h\norm\cdot\ts},
\\ \\
\displaystyle{\disvth^{n+1}=\frac{\displh^{n+1}-\displh^{n}}{\Delta t},\quad (\disvth^{n+1})|_{\Gamma}=(\velfth^{n+1})|_{\Gamma}, }
\end{array}
\end{equation}
where $\str^n_h=\str(\velf^n_h,p^{n}_h)$. We emphasize that here and throughout this Section we use $\beta=1$.

Let $\phi_h$ be a  test function such that $(\phi_h)_{|\Gamma}=0$. Then $(\phi_h,0)$ is an admissible test function, so we can take this test function in~\eqref{A1BisThick} to obtain:
$$
\rho_f\int_{\Omega_F}\frac{\velfth^{n+1}-\velfh^n}{\Delta t}\cdot\tfh=0,\quad (\tfh)_{|\Gamma}=0.
$$
Therefore we have $\velfth^{n+1}=\velfh^n$ on all the nodes inside the fluid domain. Hence,  the integral $\int_{\Omega_F}\frac{\velfth^{n+1}-\velfh^n}{\Delta t}\cdot\tfh$ "lives" only in narrow strip $\Omega_F^h$ i.e.
$$
\rho_f\int_{\Omega_F}\frac{\velfth^{n+1}-\velfh^n}{\Delta t}\cdot\tfh=\rho_f\int_{\Omega_F^h}\frac{\velfth^{n+1}-\velfh^n}{\Delta t}\cdot\tfh
.$$
Now, if we take into the account $(\disvth^{n+1})|_{\Gamma}=(\velfth^{n+1})|_{\Gamma}$ we see that~\eqref{A1BisThick} is indeed essentially a  structure problem because the only unknowns are the structure displacement and the structure velocity. However, the fluid inertia enters the problem through the mass matrix on the interface.

\noindent
\textbf{Step 2.} Find $(\velfh^{n+1},\disvh^{n+1},p^{n+1})\in V^{fsi}_h\times Q_h$, such that for $(\tfh,\tsh,q_h)\in V^{fsi}_h\times Q_h$, the following equality holds:
\begin{equation}\label{A2BisThick}
\begin{array}{c}
\displaystyle{
\rho_f\int_{\Omega_F}\frac{\velfh^{n+1}-\velfth^{n+1}}{\Delta t}\cdot\tfh+\rho_s\int_{\Omega_S}\frac{\disvh^{n+1}-\disvth^{n+1}}{\Delta t}\cdot\tsh+a_f(\velfh^{n+1},\tfh)}
\displaystyle{-b(p^{n+1}_h, \tfh)+b(q_h, \velfh^{n+1})=\int_{\Gamma}\str^n_h\norm\cdot\tfh,}
\\ \\
\displaystyle{(\disvh^{n+1})|_{\Gamma}=(\velfh^{n+1})|_{\Gamma}.}
\end{array}
\end{equation}
Similarly as in the previous step we see that the integral associated with the structure acceleration "lives" only in the strip $\Omega^h_S$, i.e. $\disvh^{n+1}=\disvth^{n+1}$ on the nodes inside the structure domain. Again, we can conclude that~\eqref{A2BisThick} is the fluid problem because the unknowns are the fluid velocity and the fluid pressure, while the structure inertia is taken into the account on the interface. This is crucial for the stability of the scheme.

To derive energy estimates, we take test functions $(\tfh,\tsh)=\Delta t(\velfth^{n+1},\disvth^{n+1})$ in \eqref{A1BisThick}, $(\tfh,\tsh)=\Delta t(\velfh^{n+1},\disvh^{n+1})$ in \eqref{A2BisThick} and sum the resulting expressions. We end up with the same energy estimates as in \cite{thick}, but with the following additional term (analogously as in Section \ref{sec:Stability}):
$$
I=\Delta t\int_{\Gamma}\str_h^n\norm(\disvh^{n+1}-\disvth^{n+1}).
$$
The problem is that now we do not have the thin structure inertia that would help us to deal with the problematic term. However, numerically we  still have some structure inertia  in the fluid step. Namely, after integration by parts of \eqref{A2BisThick} we obtain:
\begin{equation}\label{ThickInertia}
\displaystyle{\rho_s\int_{\Omega^h_S}\frac{\disvh^{n+1}-\disvth^{n+1}}{\Delta t}\cdot\tsh=\int_{\Gamma}(\str^n_h-\str^{n+1}_h)\norm\cdot\tfh.}
\end{equation}
Let us now take into account that $\tfh=\tsh|_{\Gamma}$ to derive the relation between the structure inertia and the fluid force on the interface $\Gamma$. First, we introduce some notation.

Let $\phi^h_i$, $i=1,\dots,m$ be the finite element functions on the interface $\Gamma$ and $\psi^h_i$, $i=1,\dots,m$, corresponding finite elements functions in the structure domain $\Omega_S$, i.e. $(\psi^h_i)_{|\Gamma}=\phi^h_i$ and $\psi_i$ are supported in $\Gamma\times (0,h)$. Notice that we consider only the structure elements that are associated with the nodes on the interface.
We denote by $A_h$ and $B_h$ the associated mass matrices, $A_h=(a^h_{ij})=(\int_{\Omega_S}\psi^h_i\psi^h_j)_{ij}$ and $B_h=(b^h_{ij})=(\int_{\Gamma}\phi^h_i\phi^h_j)_{ij}$.
Let $v=\sum_{i=1}^m v_i\psi^h_i$ be a function defined on the structure domain. Then its trace is given by $v|_{\Gamma}=\sum_{i=1}^m v_i\phi^h_i$. With a slight abuse of a notation we will identify function $v$ with vector ${\bf v}=(v_i)_{i=1,\dots,m}$. Furthermore, we have
$$
\|v\|^2_{L^2(\Omega_S)}=\sum_{i,j=1}^m v_iv_ja_{ij}=A_h{\bf v}\cdot {\bf v},
\qquad 
\|v_{|\Gamma}\|^2_{L^2(\Gamma)}=\sum_{i,j=1}^m v_iv_jb_{ij}=B_h{\bf v}\cdot {\bf v}.
$$
Moreover, notice that $\|A_h\|\approx h^3$ and $\|B_h\|\approx h^2$ because $\psi_i$ are $3d$ elements and $\phi_i$ are $2d$ elements, and their maximum triangle diameter is $h$.
Using the following equation
\begin{equation}\label{ThickInertia2}
\displaystyle{\rho_S\int_{\Omega^h_S}\frac{{\bf v}^{n+1}_h-\tilde{\bf v}^{n+1}_h}{\Delta t}\cdot\tfh=\int_{\Gamma}(\str^n_h-\str^{n+1}_h)\norm\cdot\tsh.},
\end{equation}
we obtain
$$
\rho_s(\frac{{\bf v}^{n+1}_h-\tilde{\bf v}^{n+1}_h}{\Delta t})_{|\Gamma}=A_h^{-1}B_h(\str^n_h-\str^{n+1}_h)\norm.
$$
Here we used the identification between functions and the coefficients vectors in order to define the operator on the right-hand side.
Therefore we have
$$
\Delta t\int_{\Gamma}\str^n_h\norm\cdot({\bf v}^{n+1}_h-\tilde{\bf v}^{n+1}_h)=\frac{\Delta t^2}{\rho_s}\int_{\Gamma}\str^n_h\norm\cdot A_h^{-1}B_h(\str^n_h-\str^{n+1}_h)\norm=\
\frac{1}{2}\frac{\Delta t^2}{\rho_s}\Big (\int_{\Gamma} (A_h^{-1}B_h \str^n_h)\norm\cdot\str^n_h\norm
$$
$$
-\int_{\Gamma} (A_h^{-1}B_h\str^{n+1}_h\norm)\cdot\str^{n+1}_h\norm
+\int_{\Gamma} (A_h^{-1}B_h( \str^n_h-\str^{n+1}_h)\norm)\cdot(\str^n_h-\str^{n+1}_h)\norm\Big ).
$$
Let us calculate the last term
$$
\frac{1}{2}\frac{\Delta t^2}{\rho_s}\int_{\Gamma} (A_h^{-1}B_h( \str^n_h-\str^{n+1}_h)\norm)\cdot(\str^n_h-\str^{n+1}_h)\norm=\frac{1}{2}\rho_s\int_{\Gamma}({\bf v}^{n+1}_h-\tilde{\bf v}^{n+1}_h)\cdot(B_h^{-1}A_h({\bf v}^{n+1}_h-\tilde{\bf v}^{n+1}_h))
$$
$$
=\frac{\rho_s}{2}\|{\bf v}^{n+1}_h-\tilde{\bf v}^{n+1}_h\|^2_{L^2(\Omega_S)}.
$$
Notice that the same term is obtained in the left-hand side by taking test functions $(\tfh,\tsh)=\Delta t(\velfh^{n+1},\disvh^{n+1})$ in \eqref{A2BisThick} and using identity~\eqref{identity} on the second term (the structure inertia).
Therefore this term is canceled with the same term from the left-hand side that comes from the structure inertia that is included in the fluid step. Moreover, $A^{-1}_hB_h$ is a positive-definite matrix and therefore one can proceed to obtain analogous stability and convergence estimates as in the thin structure case as long as the term $\frac{1}{2}\frac{\Delta t^2}{\rho_s}\|A^{-1}_hB_h\|$ stays bounded. Since $\|A_h^{-1}B_h\|\approx \frac{C}{h}$ we have the following stability condition:
$$
\Delta t^2\ltt h.
$$
More precisely we proved the following stability result:
\begin{theorem}\label{ThickStability}
Let $\{(\velfh^n, \disvth^n,  \disvh^n, \displh^n \}_{0 \le n \le N}$ be the solution of~\eqref{A1BisThick}-\eqref{A2BisThick}.
Then, the following estimate holds:
  \begin{gather}
  \mathcal{E}_f(\velfh^N)+\mathcal{E}_v(\disvh^N)+\mathcal{E}_s(\displh^N)  + \frac{\Delta t^2}{ \rho_s h} \n\str(\velfh^{N},p_h^{N}) \norm\n^2_{L^2(\Gamma)} +\frac{\rho_f \Delta t^2}{2}\sum_{n=0}^{N-1}\n  d_t \velfh^{n+1} \n^2_{L^2(\Omega_F)}
     \notag \\  
 +\frac{\Delta t^2}{2}\sum_{n=0}^{N-1} a_{ts}(d_t\displh^{n+1},d_t\displh^{n+1})
+ \mu \Delta t \sum_{n=0}^{N-1} \n \velfh^{n+1} \n^2_{F}  +\frac{\rho_s }{2}\sum_{n=0}^{N-1}\n \disvth^{n+1}-\disvh^{n} \n^2_{L^2(\Omega_S)}  
\notag \\
\ltt   \mathcal{E}_f(\velfh^0)+\mathcal{E}_v(\disvh^0)+\mathcal{E}_s(\displh^0)   + \frac{\Delta t^2}{\rho_s h} \n\str(\velfh^{0},p_h^{0}) \norm\n^2_{L^2(\Gamma)}
+\Delta t \sum_{n=0}^{N-1} \n p_{in/out} (t^{n+1})\n^2_{L^2(\Sigma)},\label{theorem:stabilityThick}
  \end{gather}
  where 
\begin{gather}
 \mathcal{E}_f(\velfh^n) = \frac{\rho_f}{2} \n\velfh^n\n^2_{L^2(\Omega_F)}, \quad \mathcal{E}_v (\disvh^n)= \displaystyle\frac{\rho_{s} }{2} \n \disvh^{n}\n^2_{L^2(\Omega_S)}, \quad  \mathcal{E}_s (\displh^n) =\frac{1}{2} a_{ts}(\displh^n, \displh^n).
\end{gather}
\end{theorem}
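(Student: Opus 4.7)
The plan is to mimic the thin-structure stability proof from Section~\ref{sec:Stability}, identifying where the absence of a thin elastic interface forces us to pay the CFL-type price $\Delta t^2 \lesssim h$. First, I would test \eqref{A1BisThick} with $(\tfh,\tsh) = \Delta t(\velfth^{n+1},\disvth^{n+1})$ and \eqref{A2BisThick} with $(\tfh,\tsh,q_h) = \Delta t(\velfh^{n+1},\disvh^{n+1},p_h^{n+1})$, sum the two equations, and apply the polarization identity \eqref{identity} to each of the four difference terms $\velfth^{n+1}-\velfh^{n}$, $\velfh^{n+1}-\velfth^{n+1}$, $\disvth^{n+1}-\disvh^{n}$, $\disvh^{n+1}-\disvth^{n+1}$, together with the same identity for $a_{ts}(\displh^{n+1}-\displh^{n},\displh^{n+1}-\displh^{n})$. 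The intermediate quantities $\|\velfth^{n+1}\|_{L^2(\Omega_F)}^2$ and $\|\disvth^{n+1}\|_{L^2(\Omega_S)}^2$ cancel by construction (they enter with opposite signs), and the pressure-divergence pairing vanishes by choosing $q_h = p_h^{n+1}$. This yields telescoping kinetic/elastic energy terms, the viscous dissipation $\mu\Delta t\|\velfh^{n+1}\|_F^2$, the numerical dissipation terms $\|\disvth^{n+1}-\disvh^{n}\|^2_{L^2(\Omega_S)}$ and $\|\velfth^{n+1}-\velfh^{n}\|^2_{L^2(\Omega_F)}$, and a single remainder from the explicit loading, namely
\begin{equation*}
\mathcal{I} = \Delta t \int_{\Gamma} \str_h^n \norm \cdot (\disvh^{n+1} - \disvth^{n+1})\, dS,
\end{equation*}
plus the boundary term involving $p_{in/out}$ that is handled exactly as in Section~\ref{sec:Stability} with Cauchy--Schwarz, trace and Korn.

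The key step is the treatment of $\mathcal{I}$. Taking $\tsh$ supported in the strip $\Omega_S^h$ in \eqref{A2BisThick}, integrating by parts in the momentum equation, and using $\disvh^{n+1} = \disvth^{n+1}$ on interior nodes, one obtains the discrete interface identity \eqref{ThickInertia2}. Passing to the nodal-vector representation and inverting the interface mass matrix $B_h$ against the boundary-layer mass matrix $A_h$, I would then substitute
\begin{equation*}
\rho_s\,\bigl(\tfrac{\disvh^{n+1}-\disvth^{n+1}}{\Delta t}\bigr)\big|_{\Gamma} = A_h^{-1}B_h\,(\str_h^n - \str_h^{n+1})\norm
\end{equation*}
into $\mathcal{I}$. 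Expanding $\str_h^n\cdot(\str_h^n-\str_h^{n+1})$ via the polarization identity produces a telescoping contribution
$\frac{\Delta t^2}{2\rho_s}\bigl(\langle A_h^{-1}B_h\str_h^n\norm,\str_h^n\norm\rangle - \langle A_h^{-1}B_h\str_h^{n+1}\norm,\str_h^{n+1}\norm\rangle\bigr)$, plus a residual $\frac{\rho_s}{2}\|\disvh^{n+1}-\disvth^{n+1}\|^2_{L^2(\Omega_S)}$ which cancels exactly against the numerical dissipation already sitting on the left-hand side (this is the precise analogue of the thin-structure cancellation noted in \eqref{Iest}).

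The main obstacle, and the genuinely new ingredient relative to Section~\ref{sec:Stability}, is that the telescoping boundary term is no longer of pure $L^2(\Gamma)$ type: it is a quadratic form in the discrete operator $A_h^{-1}B_h$, which is symmetric positive definite but has operator norm $\|A_h^{-1}B_h\| \lesssim 1/h$ due to the one-extra-dimension scaling $\|A_h\|\approx h^3$ versus $\|B_h\|\approx h^2$. Consequently
\begin{equation*}
\frac{\Delta t^2}{2\rho_s}\bigl\langle A_h^{-1}B_h\str_h^{N}\norm,\str_h^{N}\norm\bigr\rangle \;\lesssim\; \frac{\Delta t^2}{\rho_s h}\,\|\str_h^{N}\norm\|^2_{L^2(\Gamma)},
\end{equation*}
which is exactly the $\Delta t^2/(\rho_s h)$ factor appearing in \eqref{theorem:stabilityThick}. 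In order to absorb the corresponding bound at time level $N-1$ on the right-hand side (or more precisely to keep the scheme stable after summing from $n=0$ to $N-1$, where a Gronwall argument on $\|\str_h^{n}\norm\|^2_{L^2(\Gamma)}$ must be applied), one needs $\frac{\Delta t^2}{\rho_s h}$ to be uniformly bounded, i.e.\ the CFL-type condition $\Delta t^2 \lesssim h$. Summing the resulting per-step inequality from $n=0$ to $N-1$ and combining with the $p_{in/out}$ bound then yields \eqref{theorem:stabilityThick}; the hardest bookkeeping will be verifying positivity and the norm estimate for $A_h^{-1}B_h$ under the assumed mesh regularity so that the telescoping argument is rigorous.
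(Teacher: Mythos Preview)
Your proposal follows the paper's argument almost verbatim: the same test functions, the same identification of the splitting term $\mathcal{I}$, the same use of the discrete interface identity \eqref{ThickInertia2} to substitute $A_h^{-1}B_h(\str_h^n-\str_h^{n+1})\norm$, the same polarization producing a telescoping quadratic form plus the residual $\tfrac{\rho_s}{2}\|\disvh^{n+1}-\disvth^{n+1}\|_{L^2(\Omega_S)}^2$ that cancels exactly against the numerical dissipation already on the left.

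One correction to your final bookkeeping: no Gronwall argument is needed, and there is no ``time level $N-1$'' term to absorb. Because the residual cancels \emph{exactly}, the polarization leaves a clean telescoping sum
\[
\frac{\Delta t^2}{2\rho_s}\Bigl(\langle A_h^{-1}B_h\,\str_h^0\norm,\str_h^0\norm\rangle_{L^2(\Gamma)}-\langle A_h^{-1}B_h\,\str_h^N\norm,\str_h^N\norm\rangle_{L^2(\Gamma)}\Bigr),
\]
and since $B_hA_h^{-1}B_h$ is symmetric positive definite the $N$-term moves to the left as a nonnegative quantity while the $0$-term stays on the right as initial data, exactly as in \eqref{Iest} for the thin case. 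The factor $h^{-1}$ in the statement comes from the two-sided norm equivalence $\langle A_h^{-1}B_h f,f\rangle_{L^2(\Gamma)}\approx h^{-1}\|f\|_{L^2(\Gamma)}^2$ on a quasi-uniform mesh: you need the \emph{upper} bound for the initial-data term on the right, but the \emph{lower} bound for the level-$N$ term on the left (your displayed inequality has the direction reversed for that term). The condition $\Delta t^2\lesssim h$ is then what keeps the right-hand side bounded independently of $h$ and $\Delta t$, not a mechanism for closing a Gronwall loop.
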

\begin{remark}
Using the obtained stability estimates in an analogous way as in Section~\ref{Sec:Convergence}, one can show that the scheme is convergent and its order of temporal accuracy is $\mathcal{O}(\frac{\Delta t}{\sqrt{h}})$. This is the same order of accuracy that is obtained by an alternative splitting strategy in \cite{fernandez2015convergence}. 
\end{remark}

\section{Numerical results}

In this section we focus on the verification of  the stability and convergence results of the  kinematically coupled $\beta$ scheme. We consider a benchmark problem similar to the one proposed in~\cite{Fernandez3}, belonging to a class of benchmark problems commonly used to validate FSI solvers. As in~\cite{Fernandez3}, we consider a two-dimensional test problem. The fluid domain is the rectangle $\domf=(0,L) \times (0,R)$ with $R=0.5$ cm, $L=5$ cm.  The top boundary corresponds to the fluid-structure interface, while symmetry conditions are prescribed at the bottom boundary. The fluid physical parameters are given by $\rho_f = 1$ g/cm$^3$ and $\mu$=0.035 g/cm s.  The flow is driven by the inlet time-dependent pressure data, which is a cosine pulse with maximum amplitude $p_{max} = 1.3333 \times 10^4$ dyne/cm$^2$ lasting for $t_{max}=0.003$ seconds, 
while the outlet normal stress is kept at zero:
 \begin{equation*}
 p_{in}(t) = \left\{\begin{array}{l@{\ } l} 
\frac{p_{max}}{2} \big[ 1-\cos\big( \frac{2 \pi t}{t_{max}}\big)\big] & \textrm{if} \; t \le t_{max}\\
0 & \textrm{if} \; t>t_{max}
 \end{array} \right.,   \quad p_{out}(t) = 0 \;\forall t \in (0, T).
\end{equation*}
 The problem is solved over the time interval [0,16] ms. 

\subsection{Fluid-thin structure interaction}
In this subsection we consider the interaction between a fluid and a thin structure. We model the structure elastodynamics using a generalized string model with the assumption of zero axial displacement:
\begin{gather}
\displ=(0, \eta_r)^T, \quad {\mathcal L}_S\displ^{n+1} = (0, C_0 \eta_r - C_1 \partial_{xx} \eta_r)^T \quad \textrm{with} \;\; C_0=\frac{E \epsilon}{R^2(1-\sigma^2)} \;\;\textrm{and} \;\; C_1=\frac{E \epsilon}{2(1+\sigma)},
\end{gather}
where $E$ is the Young's modulus and $\sigma$ is Poisson's ratio. The structure physical parameters are $\rho_s= 1.1$ g/cm$^3$, $\epsilon=0.1$ cm, $E=0.75 \cdot 10^6$ dyne/cm$^2$ and $\sigma=0.5$. To discretize the fluid problem in space, we use  the $\mathbb{P}_1$ bubble--$ \mathbb{P}_1$ elements for the velocity and pressure, and $\mathbb{P}_1$ elements to discretize the structure problem.

In order to verify the time convergence estimates from Theorem~\ref{MainThm}, we fix $h=L/640=0.0078$ cm and define the reference solution to be the one obtained with $\Delta t = 5 \cdot 10^{-6}$. Figure~\ref{F1}
shows the relative error between the reference solution and solutions obtained with $\Delta t = 5 \cdot 10^{-4}, 2.5 \cdot 10^{-4}, 1.25 \cdot 10^{-4}, 6.25 \cdot 10^{-5}$ and $3.125 \cdot 10^{-5}$ for the fluid velocity $\velfh^N$ in $L^2$-norm (left) and for the structure displacement $\displh^N$ in $\|\cdot\|_S$ norm (right) obtained at $T=10 $ ms. We compare the rate of convergence for the values of $\beta=0, 0.25, 0.5, 0.75$ and $\beta=1$. We observe that the case when $\beta=1$ leads to the optimal, first-order in time convergence, while sub-optimal convergence is obtained when  $\beta<1$. 
\begin{figure}[ht!]
 \centering{
 \includegraphics[scale=0.7]{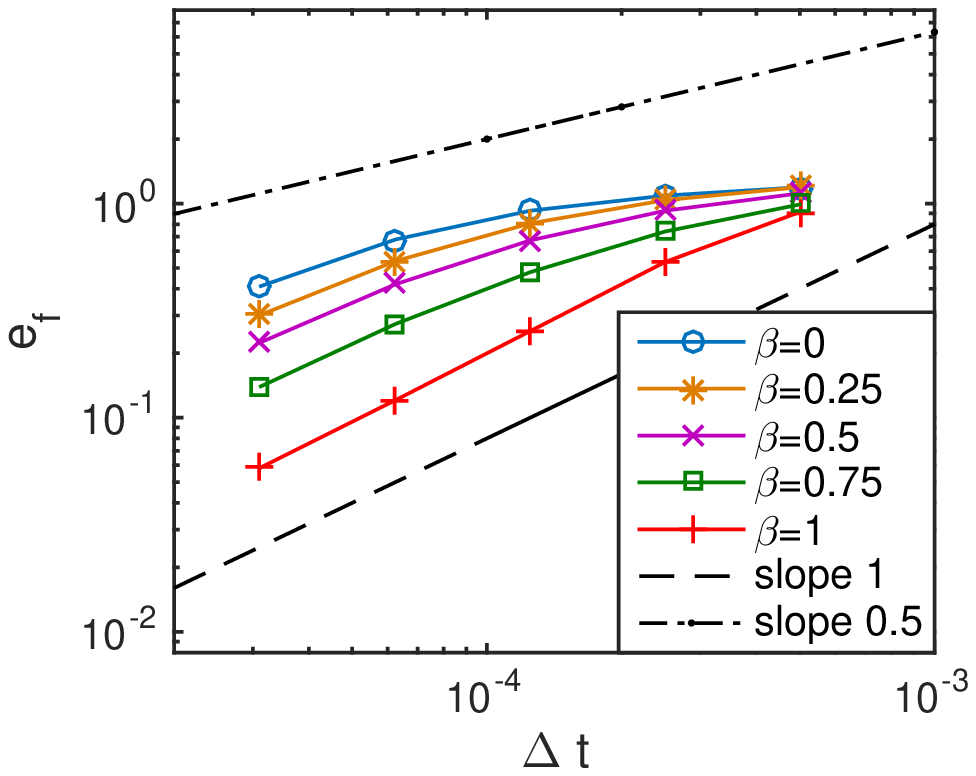}
\includegraphics[scale=0.7]{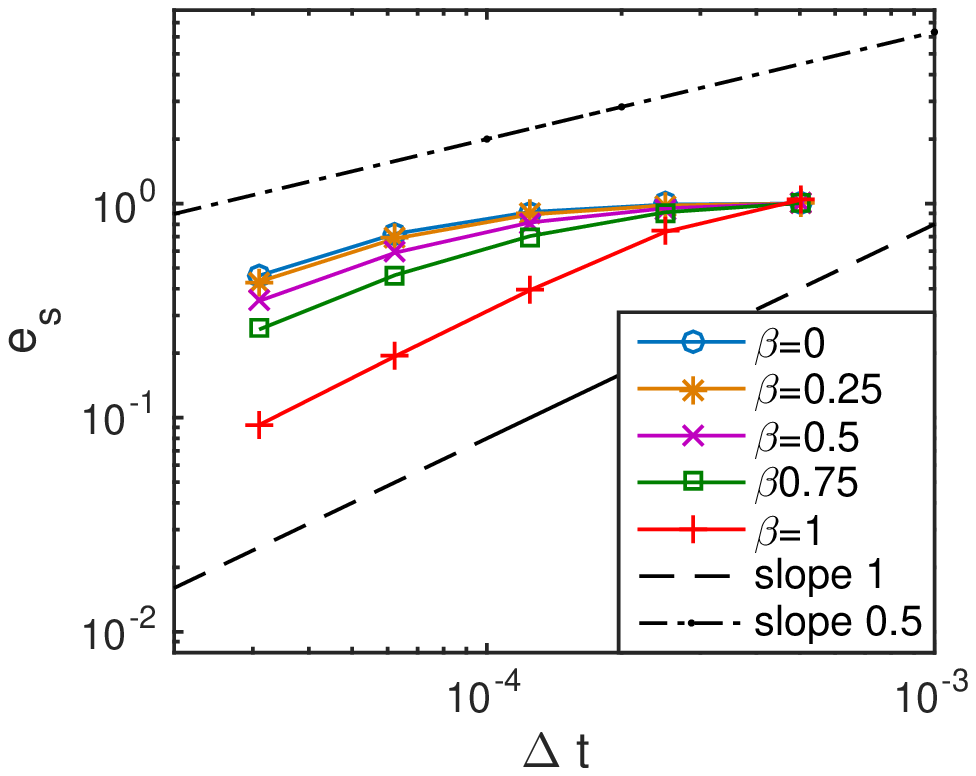} 
 }
 \caption{Time convergence  obtained at t=10 ms. Left: Relative error for fluid velocity in $L^2$-norm. Right: Relative error for  the structure displacement in $\|\cdot\|_S$ norm. Higher rate of convergence is observed in the case when $\beta=1$. }
\label{F1}
 \end{figure}
 
 \subsection{Fluid-thick structure interaction}
 In this subsection we model the interaction between a fluid and thick structure using algorithm~\eqref{A1BisThick}-\eqref{A2BisThick}. We assume that the thick structure elastodynamics is described by 
 \begin{equation}
a_{ts}(\displ,\ts)=2 \mu_s \int_{\Omega_S}{\bf D}(\displ): {\bf D} (\ts)+\lambda_s \int_{\Omega_S}(\nabla \cdot \displ) (\nabla \cdot \ts)+ C_{as} \int_{\Omega_S} \displ \cdot\ts,
\end{equation} 
where $\Omega_S = (0,L) \times (0,H),$ with $L=5$ cm, $H=0.1$ cm. The last term in the thick structure model is obtained from the axially symmetric model, and it represents a spring keeping the top and bottom boundaries connected~\cite{thick}.  The thick structure physical parameters are $\rho_s= 1.1$ g/cm$^3$, $\mu_s =2.586 \cdot 10^5$ dyne/cm$^2$, $\lambda_s =2.328 \cdot 10^6$ dyne/cm$^2$ and $C_{as}=4 \cdot 10^6$ dyne/cm$^4$. To discretize the fluid problem in space, we use  the $\mathbb{P}_1$--iso--$\mathbb{P}_2$ and $ \mathbb{P}_1$ elements for the velocity and pressure, and $\mathbb{P}_1$ elements to discretize the structure problem. 

We define the reference solution to be the one obtained with $h=0.00625$ (corresponding to the velocity mesh) and $\Delta t = 5 \cdot 10^{-6}$. To verify the convergence rate $\mathcal{O}(\frac{\Delta t}{\sqrt{h}})$ predicted in Remark 3, we consider two different scalings, $\Delta t = \mathcal{O}(h)$ and  $\Delta t = \mathcal{O}(h^{3/2})$. Figure~\ref{F3}
shows a comparison of the relative error between the reference solution and solutions obtained with $\Delta t = \mathcal{O}(h)$ and $\Delta t = \mathcal{O}(h^{3/2})$ for the fluid velocity $\velfh^N$ in $L^2$-norm (left) and for the structure displacement $\displh^N$ in $\|\cdot\|_S$ norm (right)  at $T=10 $ ms. In the case when $\Delta t = \mathcal{O}(h)$, we used time steps $\Delta t = 5 \cdot 10^{-4}, 2.5 \cdot 10^{-4}$ and $1.25 \cdot 10^{-4}.$ For $\Delta t = \mathcal{O}(h^{3/2})$, we used $\Delta t = 5 \cdot 10^{-4}, 1.76 \cdot 10^{-4}$ and $6.25 \cdot 10^{-5}.$ We observe that the suboptimal convergence is obtained when $\Delta t = \mathcal{O}(h)$, which is improved when $\Delta t = \mathcal{O}(h^{3/2})$. 
\begin{figure}[ht!]
 \centering{
 \includegraphics[scale=0.65]{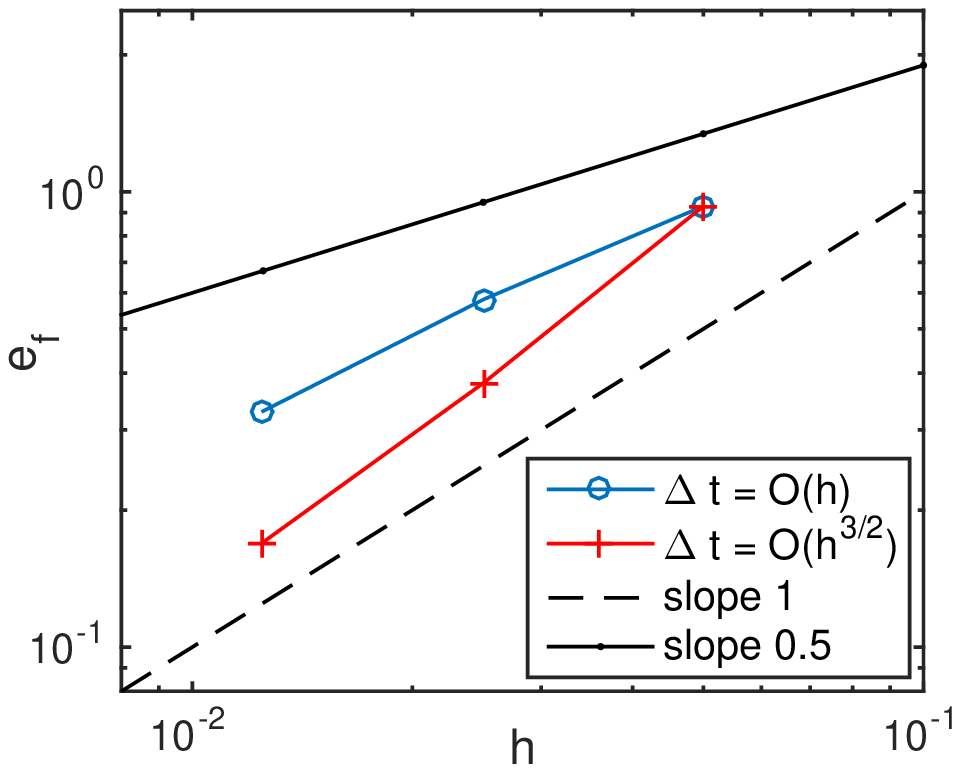}
\includegraphics[scale=0.65]{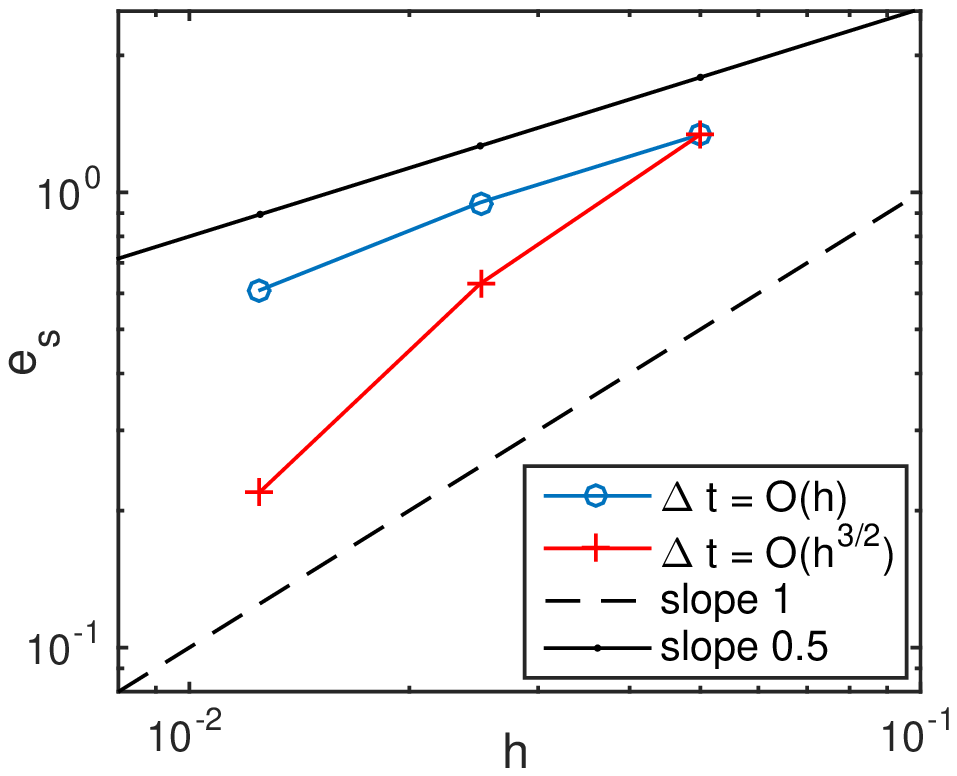} 
 }
 \caption{Relative error  obtained at t=10 ms using $\Delta t = \mathcal{O}(h)$ and $\Delta t = \mathcal{O}(h^{3/2})$. Left: Relative error for fluid velocity in $L^2$-norm. Right: Relative error for  the structure displacement in $\|\cdot\|_S$ norm. Higher rate of convergence is observed in the case when $\Delta t = \mathcal{O}(h^{3/2})$. }
\label{F3}
 \end{figure}

\section{Conclusions}

In order to complete the theory behind the kinematically coupled scheme and its variants, in this manuscript we analyze the stability and convergence properties of $\beta$-scheme. This is the first work that presents the \textit{a priori} error estimates which include the operator splitting error, and proves the optimal  $\mathcal{O}(\Delta t)$ convergence in time when $\beta=1$. Furthermore, we discuss the extension of our results to the fluid-thick structure interaction problem. 
Numerical experiments confirm the theoretical results.

\bibliography{kincouplconv}
\bibliographystyle{siam}

\end{document}